\newtheorem{theorem}{Theorem}[section]
\newtheorem{lemma}[theorem]{Lemma}
\newtheorem{proposition}[theorem]{Proposition}
\theoremstyle{definition}
\newtheorem{definition}[theorem]{Definition}
\newtheorem{example}[theorem]{Example}
\newtheorem{remark}[theorem]{Remark}
\numberwithin{equation}{section}
\newcommand{\R}{\mathbb R}
\newcommand{\B}{\mathbb B}
\newcommand{\X}{\mathbb X}
\newcommand{\Y}{\mathbb Y}
\newcommand{\nullv}{\mathbf{0}}
\newcommand{\grph}{{\rm graph}\, }
\newcommand{\dom}{{\rm dom}\, }
\newcommand{\inte}{{\rm int}\, }
\newcommand{\Lin}{\mathcal{L}}
\newcommand{\Solv}{\mathcal{S}}
\newcommand{\val}{{\rm val}_{\mathcal{P}}}
\newcommand{\cov}{{\rm cov}\, }
\newcommand{\Fder}{{\rm D}}
\newcommand{\stsl}[1]{|\nabla #1|}     
\newcommand{\sostsl}[1]{\overline{|\nabla #1|}{}^>} 
\newcommand{\sostslx}[1]{\overline{|\nabla_x #1|}{}^>}   
\newcommand{\Argmin}[1]{{\rm Argmin}(#1)}
\newcommand{\incr}[1]{{\rm inc}\,#1}
\newcommand{\ball}[2]{{\rm B}\left(#1,#2\right)}
\newcommand{\exc}[2]{{\rm exc}\left(#1,#2\right)}
\newcommand{\haus}[2]{{\rm haus}\left(#1,#2\right)}
\newcommand{\ucalm}[2]{\overline{\rm clm}\, #1(#2)}
\newcommand{\lcalm}[2]{\underline{\rm clm}\, #1(#2)}
\newcommand{\fulcalm}[2]{{\rm clm}\, #1(#2)}
\newcommand{\Lipusc}[2]{{\rm Lipusc}\,#1(#2)}
\newcommand{\dist}[2]{{\rm dist}\left(#1,#2\right)}
\newcommand{\Lipparz}[2]{{\rm Lip_p}\,#1(#2,X)}
\newcommand{\Liploc}[2]{{\rm Lip}\,#1(#2)}
\newcommand{\supf}[2]{\varsigma({#1,#2})}
\newcommand{\Liplsc}[3]{{\rm Liplsc}\,#1(#2,#3)}
\newcommand{\calm}[3]{{\rm clm}\,#1(#2,#3)}    
\newcommand{\Lip}[3]{{\rm Lip}\,#1(#2,#3)}
\begin{document}

\title{On the quantitative solution stability of parameterized set-valued inclusions}

\author{A. Uderzo}

\address{Dipartimento di Matematica e Applicazioni, Universit\`a
di Milano-Bicocca, Via R. Cozzi, 55 - 20125 Milano, Italy}

\email{amos.uderzo@unimib.it}


\subjclass{Primary: 49J53; Secondary: 49J52, 90C31, 90C48.}

\date{\today}


\keywords{Set-valued inclusion; solution mapping; Lipschitz semicontinuity;
calmness; optimal value function; parametric optimization}

\begin{abstract}
The subject of the present paper are stability properties of the solution
set to set-valued inclusions. The latter are problems emerging in robust
optimization and mathematical economics, which can not be cast in
traditional generalized equations. The analysis here reported focuses
on several quantitative forms of semicontinuity for set-valued mappings,
widely investigated in variational analysis, which include, among
others, calmness. Sufficient conditions for the occurrence of
these properties in the case of the solution mapping to a parameterized
set-valued inclusion are established. Consequences on the calmness
of the optimal value function, in the context of parametric optimization,
are explored. Some specific tools for the analysis of the sufficient conditions,
in the case of set-valued inclusion with concave multifunction term, are
provided in a Banach space setting.
\end{abstract}

\maketitle


\begin{flushright}
{\small
 In fond and respectful memory of \\
 Diethard Pallaschke (1940-2020)}
\end{flushright}

\vskip1cm



\section{Introduction}

Let $F:P\times X\rightrightarrows Y$ be a given set-valued mapping
and let $C\subset Y$ be a (nonempty) closed and proper subset of $Y$,
where $P$, $X$ and $Y$ are metric spaces. Fixed any $p\in P$,
the problem:
$$
   \hbox{ find $x\in X$ such that }
  F(p,x)\subseteq C  \leqno (\mathcal{SVI}_p)
$$
is called parameterized set-valued inclusion.
As $P$ plays the role of parameter space, $(\mathcal{SVI}_p)$
is the parameterized form of a class of problems, which recently
emerged in optimization and variational analysis. More precisely,
they arise in the context of robust and vector optimization, in
mathematical economics (see \cite{BenNem98,Uder19,Uder20}), while it seems
to be reasonable that they may be of interest also in set-valued
optimization, where partial orders over sets are formalized in terms
of set inclusions (see \cite{KhTaZa15}).
Such kind of problems can not be cast in traditional generalized
equations. In the terminology of set-valued analysis, the former ones
correspond to determining the upper inverse image of a set through
a multifunction, in contrast to the latter ones, which are somehow
related to the lower inverse image.

The specific feature making a problem $(\mathcal{SVI}_p)$
essentially different from traditional generalized equations is
that the term $F$ is a multi-valued mapping, whose values must
be included in another object (set $C$). It is clear that,
whenever $F$ is single-valued, problem $(\mathcal{SVI}_p)$
can be easily embedded in the format
$$
   \hbox{ find $x\in X$ such that }
  \nullv\in f(p,x)+G(p,x), \leqno (\mathcal{GE}_p)
$$
by setting $f(p,x)=-F(p,x)$ and $G(p,x)=C$, provided that $f$
and $G$ take values in a vector space $Y$, having $\nullv$ as a null
element.
After the pioneering work of S.M. Robinson (see \cite{Robi79}),
in the last decades the literature devoted to parameterized generalized
equations mainly concentrated on the format $(\mathcal{GE}_p)$,
being guided by applications to the theory of variational
inequalities, constraint systems, optimality conditions, fixed and
coincidence points, while it has left problem $(\mathcal{SVI}_p)$ so far little explored
(see \cite{DonRoc14,KlaKum02,Mord06,Robi79} and references
therein).
It must be noted that, in the format $(\mathcal{GE}_p)$, the set-valued
term $f(p,x)+G(p,x)$ is requested to include another term, not be included in it.
Consequently, it presents a different perspective with respect to  $(\mathcal{SVI}_p)$.
Thus, the kind of relation evoked in the above formats appear hardly
compatible with each other.
It seems that the only framework able to subsume both, $(\mathcal{SVI}_p)$
and $(\mathcal{GE}_p)$, is the one considered in \cite[Section 3]{Uder17},
in connection with the so-called set-inclusion problems,  which
in its parameterized version can be formulated as
$$
   \hbox{ find $x\in X$ such that }
  F(p,x)\subseteq \Psi(p,x),  \leqno (\mathcal{SI}_p)
$$
where $\Psi:P\times X\rightrightarrows Y$ is another set-valued mapping
between metric spaces. Nevertheless, the analysis approach proposed
in \cite{Uder17} seems not be effective in the particular case of
set-valued inclusions. Indeed, it heavily relies on the assumption
that $\Psi$ is a set-covering mapping (see \cite[Definiton 2.1]{Uder17}),
which is never fulfilled if setting $\Psi$ to be a constant set-valued
mapping, i.e. $\Psi\equiv C$, as one has to do in order to embed
$(\mathcal{SVI}_p)$ in $(\mathcal{SI}_p)$.

To the best of the author's knowledge, set-valued inclusion problems
were firstly studied in \cite{Cast99}, where an error bound is
obtained by tools of convex analysis.
Subsequently, several topics related to their solution set have
started being systematically investigated in some more recent works:
solution existence and error bounds via a different approach
have been established in \cite{Uder19}; primal and dual elements for
the tangential approximation of the solution set have been provided
in \cite{Uder20}.
Following this line of research, the present paper aims at beginning
a perturbation analysis for the problem at the issue, by considering
a parameter dependence as in $(\mathcal{SVI}_p)$.
Such a perspective leads to undertake a study of the stability properties
of the solution set. A way to do this ``quantitatively" consists in investigating
Lipschitz semicontinuity properties of the solution mapping associated
with the parameterized class of set-valued inclusions, i.e. the
(generally multi-valued, with possibly empty values) mapping
$\Solv:P\rightrightarrows X$, defined by
\begin{equation}    \label{eq:Solvdef}
  \Solv(p)=\{x\in X:\ F(p,x)\subseteq C\}.
\end{equation}
These well-known properties found manifold relevant employments in
variational analysis as generalization of the notion of Lipschitz
continuity.
While investigations on Lipschitz semicontinuity properties for the
solution mapping associated to traditional generalized equations
have been already carried out (see, for instance, \cite{KlaKum02,Uder12,Uder14b}),
they appear to be new in the context of set-valued inclusions.
In the present analysis, they turn out to afford fruitful insights
on the behaviour of $\Solv$, such as nonemptiness, linear dependence
on the parameter perturbations near a reference value $\bar p\in P$
of the distance of its values from an element $\bar x\in\Solv(\bar p)$,
or from the whole set $\Solv(\bar p)$.
A key aspect of the issue is that all these features are measured
by a suitable modulus associated with each of the Lipschitz
semicontinuity properties.

The main achievements exposed in the paper are obtained by an
approach widely employed in variational analysis. According to it,
a property for set-valued mappings under investigation and an
estimate of the relative modulus are established by proving the
existence of minimizers (in fact, zeroes) of an excess function
associated to $(\mathcal{SVI}_p)$, called $\varphi_F$, a sort of merit
function measuring the violation of the inclusion in $(\mathcal{SVI}_p)$.
Conditions for the existence of those minimizers are formulated
in terms of differential constructions, which are variants of
the notion of strong slope. Such an approach thus reveals a
relation of the properties under consideration with the validity
of error bounds for proper inequalities, the latter being a topic
extensively investigated for several decades, after the seminal work
\cite{Hoff52} by A.J. Hoffman (see, among others,
\cite{AzeCor04,FaHeKrOu10,Ioff18,KrNgTh10,Krug15,Pang97,Robi75,WuYe02}).
Nevertheless, existent results on error bounds seem not be suitable
for application in the present context because of the peculiar
form of the function $\varphi_F$.

The contents of the paper are arranged in the subsequent sections
according to the following synopsis. In Section \ref{Sect:2},
several Lipschitz semicontinuity properties for set-valued mappings
acting between metric spaces, along with their moduli, are
presented, connections with other continuity and Lipschitzian type properties
of large employment in variational analysis are discussed, and
the basic tools of analysis are laid down.
In Section \ref{Sect:3}, a sufficient condition for each of the Lipschitz
semicontinuity properties presented in the previous section is established
in terms of differential conditions valid in a metric space setting.
Each of them is complemented with a quantitative estimate of the respective
modulus.
Section \ref{Sect:4} explores some consequence of the aforementioned
findings on the optimal value analysis in the context of parametric
optimization, for problems whose feasible region is defined by
a set-valued inclusion.
In Section \ref{Sect:5}, the analysis of the differential conditions
introduced in Section \ref{Sect:3} is deepened in a Banach
space setting. More precisely, under a concavity assumption on the
set-valued term defining an inclusion problem $(\mathcal{SVI}_p)$,
the fulfilment of the above conditions is shown to be guaranteed,
and somehow measured, by the occurrence of the metric $C$-increase
property.
In Section \ref{Sect:6}, conclusions and directions for expanding
the present analysis, in the light of the reported achievements,
are briefly indicated.

The notation in use throughout the paper is standard. If $A$ is a subset
of a metric space $(X,d)$, and $x\in X$, $\dist{x}{A}=\inf_{a\in A}
d(x,a)$ denotes the distance of $x$ from $A$. The closed ball
centered at $x$ with radius $r\ge 0$ is indicated with $\ball{x}{r}$,
whereas $\ball{A}{r}=\{x\in X:\ \dist{x}{A}\le r\}$ denotes the
$r$-enlargement of the set $A\subseteq X$.
Given a function $\psi:X\longrightarrow\R\cup\{\pm\infty\}$, by
$[\psi\le 0]=\psi^{-1}((-\infty,0])$ the $0$-sublevel set of $\psi$
is denoted, whereas $[\psi>0]=\psi^{-1}((0,+\infty))$ stands for
the $0$-superlevel set of $\psi$.
Given a set-valued mapping $\Phi:X\rightrightarrows Y$, $\dom\Phi=
\{x\in X:\ \Phi(x)\ne\varnothing\}$ indicates the domain of $\Phi$,
while, given $C\subseteq Y$, $\Phi^{+1}(C)=\{x\in X:\ \Phi(x)\subseteq C\}$
indicates the upper inverse (image) of $C$ through $\Phi$.
The acronyms l.s.c. and u.s.c. stand for lower semicontinuous and
upper semicontinuous, respectively.


\section{Lipschitz semicontinuity and calmness properties}   \label{Sect:2}

Before any discussion, it is proper to recall the quantitative
semicontinuity properties that will be considered in the
present work, along with their respective moduli.

\begin{definition}[Lipschitz semicontinuities]    \label{def:Lipsemcont}
Let $\Phi:X\rightrightarrows Y$ be a set-valued mapping between
metric spaces. $\Phi$ is said to be:

\begin{itemize}

\item[(i)] {\it Lipschitz lower semicontinuous} at $(\bar x,\bar y)
\in\grph\Phi$ if
there exist positive constants $\delta$ and $\ell$ such that
\begin{equation}     \label{in:defLiplsc}
  \Phi(x)\cap\ball{\bar y}{\ell d(x,\bar x)}\ne\varnothing,\quad
  \forall x\in\ball{\bar x}{\delta};
\end{equation}
the value
$$
  \Liplsc{\Phi}{\bar x}{\bar y}=\inf\{\ell>0:\ \exists\delta>0
  \hbox{ for which $(\ref{in:defLiplsc})$ holds}\}
$$
is called {\it modulus of Lipschitz lower semicontinuity} of $\Phi$
at $(\bar x,\bar y)$.

\item[(ii)] {\it calm} at $(\bar x,\bar y)\in\grph\Phi$ if there exist
positive constants $\delta$, $\zeta$ and $\ell$ such that
\begin{equation}    \label{in:defsvcalm}
   \Phi(x)\cap\ball{\bar y}{\zeta}\subseteq\ball{\Phi(\bar x)}{\ell
   d(x,\bar x)},\quad\forall x\in\ball{\bar x}{\delta};
\end{equation}
the value
$$
  \calm{\Phi}{\bar x}{\bar y}=\inf\{\ell>0:\ \exists\delta,\,\zeta>0
  \hbox{ for which $(\ref{in:defsvcalm})$ holds}\}
$$
is called {\it modulus of calmness} of $\Phi$ at $(\bar x,\bar y)$.

\item[(iii)] {\it Lipschitz upper semicontinuous}\footnote{A terminological
warning is due: Lipschitz upper semicontinuity was introduced in \cite{Robi81} under
the name of ``upper Lipschitz continuity", but later popularized as
``outer Lipschitz continuity", which is the name now prevailing in the literature.}
at $\bar x\in X$
if there exist positive constants $\delta$ and $\ell$ such that
\begin{equation}    \label{in:defLipusc}
  \Phi(x)\subseteq\ball{\Phi(\bar x)}{\ell d(x,\bar x)},\quad
  \forall x\in\ball{\bar x}{\delta};
\end{equation}
the value
$$
  \Lipusc{\Phi}{\bar x}=\inf\{\ell>0:\ \exists\delta>0
  \hbox{ for which $(\ref{in:defLipusc})$ holds}\}
$$
is called {\it modulus of Lipschitz upper semicontinuity} of $\Phi$
at $\bar x$.
\end{itemize}
\end{definition}

The next example shows that the phenomena described by the notions
in Definition \ref{def:Lipsemcont} are widely spread in nature.

\begin{example}
(i) (Bundles of linear operators) Let $(\Lin(\X,\Y),\|\cdot\|_\Lin)$
denote the Banach space of all linear bounded operators between
a normed space $(\X,\|\cdot\|)$ and a Banach space $(\Y,\|\cdot\|)$,
equipped with the operator norm $\|\cdot\|_\Lin$.
Given a nonempty subset $\mathcal{G}\subseteq\Lin(\X,\Y)$,
define a set-valued mapping $\Phi:\X\rightrightarrows\Y$ as
$$
  \Phi(x)=\{\Lambda x: \ \Lambda\in\mathcal{G}\}.
$$
Then, $\Phi$ is Lipschitz l.s.c. at $(\nullv,\nullv)\in\grph\Phi$,
with $\Liplsc{\Phi}{\nullv}{\nullv}\le\inf_{\Lambda\in\mathcal{G}}
\|\Lambda\|_\Lin$, where $\nullv$ stands for the null element in any
normed space. Indeed, fixed an arbitrary $\ell>\inf_{\Lambda\in\mathcal{G}}
\|\Lambda\|_\Lin$, one has
\begin{eqnarray*}
    \dist{\nullv}{\Phi(x)} = \inf_{\Lambda\in\mathcal{G}}\dist{\nullv}{\Lambda x}
    =\inf_{\Lambda\in\mathcal{G}}\|\Lambda x\| \le \inf_{\Lambda\in\mathcal{G}}
    \|\Lambda\|_\Lin\cdot\|x\|<\ell\|x\|,\quad\forall x\in\X,
\end{eqnarray*}
which implies
$$
  \Phi(x)\cap\ball{\nullv}{\ell\|x\|}\ne\varnothing,
  \quad\forall x\in\X.
$$

(ii) Any element $\Lambda\in\Lin(\X,\Y)$ is calm at every point $(\bar x,\Lambda\bar x)
\in\grph\Lambda$, with $\calm{\Lambda}{\bar x}{\Lambda\bar x}=\|\Lambda\|_\Lin$.

(iii) (Polyhedral finite-dimensional set-valued mappings)
A set-valued mapping $\Phi:\R^n\rightrightarrows\R^m$ is said to be
polyhedral if $\grph\Phi$ is the union of finitely many polyhedral
convex subsets of $\R^n\times\R^m$. It has been proved in \cite{Robi81}
that any polyhedral mapping $\Phi$, with $\dom\Phi=\R^n$, is Lipschitz
u.s.c. at every point of $\R^n$. A remarkable consequence of the
above result is that the solution mapping to any finite-dimensional
parameterized linear variational inequality (and, in particular, to any
complementarity problem) turns out to be Lipschitz u.s.c.
(see \cite[Exercise 3D.2]{DonRoc14}).

(iv) (Reachable state mapping) Let $(X,d)$ and $(Y,d)$ be metric spaces,
let $\Omega$ be a nonempty set and let $f:X\times\Omega\longrightarrow Y$
be a single-valued mapping. Fixed $\bar x\in X$, define
$$
  \Omega_{\bar x}=\{\omega\in\Omega:\ f(\cdot,\omega) \hbox{ is Lipschitz u.s.c. at }
  \bar x\}.
$$
Then, the set-valued mapping $\Phi:X\rightrightarrows Y$ defined by
$$
  \Phi(x)=f(x,\Omega)=\{f(x,\omega):\ \omega\in\Omega\}
$$
is Lipschitz l.s.c. at every $(\bar x,\bar y)$, with $\bar y=f(\bar x,
\bar\omega)$ such that $\bar\omega\in\Omega_{\bar x}$ ($\bar y$ exists
provided that $\Omega_{\bar x}\ne\varnothing$). Indeed, by virtue of the
Lipschitz upper semicontinuity of $f(\cdot,\bar\omega)$ at $\bar x$, there exist positive
$\kappa$ and $\delta$ such that
$$
  d(f(x,\bar\omega),f(\bar x,\bar\omega))\le\kappa d(x,\bar x),\quad
  \forall x\in\ball{\bar x}{\delta}.
$$
Therefore, one finds
$$
   f(x,\bar\omega)\in\Phi(x)\cap\ball{\bar y}{\kappa  d(x,\bar x)}
   \ne\varnothing,\quad\forall x\in\ball{\bar x}{\delta},
$$
so it is $\Liplsc{\Phi}{\bar x}{\bar y}\le\kappa$.
Notice that this example can be viewed as a generalization of
example (i).
\end{example}

For the purposes of the present analysis, the following well-known facts,
concerning the properties presented in Definition \ref{def:Lipsemcont},
are worth being mentioned. The resulting scheme should help a reader
to assess the impact of the subsequent investigations and to catch
connections with several recent lines of development within variational
analysis. For the reader's convenience, it is proper to recall that a set-valued
mapping $\Phi:X\rightrightarrows Y$ between metric spaces is said to
have the Aubin property (or to be Lipschitz-like) at $(\bar x,\bar y)
\in\grph\Phi$ if there exist positive constants $\delta$, $\zeta$ and
$\kappa$ such that
$$
   \Phi(x_1)\cap\ball{\bar y}{\zeta}\subseteq\ball{\Phi(x_2)}{\kappa
   d(x_1,x_2)},\quad\forall x_1,\, x_2\in\ball{\bar x}{\delta}.
$$

\begin{itemize}

\item {\bf Fact 1.} By elementary examples, one can show that
Lipschitz lower semicontinuity and calmness are properties
independent of each other (see, for instance,
\cite[Example 1 and 2]{Uder14b}).

\item {\bf Fact 2.} The Lipschitz upper semicontinuity of $\Phi$ at $\bar x$ implies
calmness of $\Phi$ at each point $(\bar x,y)$, with $y\in\Phi(\bar x)$, and the
inequality $\calm{\Phi}{\bar x}{y}\le\Lipusc{\Phi}{\bar x}$ holds for every
$y\in\Phi(\bar x)$, as an immediate consequence of the involved definitions
(see \cite[Chapter 3H]{DonRoc14}).

\item {\bf Fact 3.} Whenever $\Phi$ happens to be single-valued in a neighbourhood
of $\bar x$, Lipschitz lower semicontinuity at $(\bar x,\Phi(\bar x))$
and Lipschitz upper semicontinuity at $\bar x$ collapse to the same property,
postulating the existence of positive $\delta$ and $\ell$ such that
\begin{equation}     \label{def:calmsvmap}
   d(\Phi(x),\Phi(\bar x))\le\ell d(x,\bar x),\quad\forall x\in
   \ball{\bar x}{\delta}.
\end{equation}
The above property is called calmness in \cite[Chapter 1C]{DonRoc14}.
Such a choice of terminology is consistent with a certain trend in the
literature, but yields a shortcoming: the notion of calmness for
set-valued mapping as given in Definition \ref{def:Lipsemcont}(ii)
does not reduce to the property defined by inequality $(\ref{def:calmsvmap})$
in the special case of single-valued mappings (in contrast with
what stated in \cite[Chapter 3H]{DonRoc14}). Indeed, if a single-valued
mapping $\Phi:X\longrightarrow Y$ is not continuous at $\bar x$, one sees
that it is possible to choose $\zeta>0$ in such a way that $\Phi(x)
\cap\ball{\Phi(\bar x)}{\zeta}=\varnothing$, so the inclusion in
$(\ref{in:defsvcalm})$ is trivially satisfied, whereas the inequality
in $(\ref{def:calmsvmap})$ can not be.

In the special case in which $\Phi$ is a single-real-valued function,
the notion of calmness as defined in $(\ref{def:calmsvmap})$ can be split,
by considering calmness from above and calmness from below.
More precisely, $\Phi:X\longrightarrow
\R\cup\{\pm\infty\}$ is said to be {\it calm from above} at $\bar x
\in\dom\Phi$ provided that there exist positive $\delta$ and $\ell$
such that
\begin{equation}    \label{in:defucalm}
   \Phi(x)-\Phi(\bar x)\le\ell d(x,\bar x),\quad\forall
   x\in\ball{\bar x}{\delta}.
\end{equation}
The value
$$
  \ucalm{\Phi}{\bar x}=\inf\{\ell>0:\ \exists\delta>0
  \hbox{ for which $(\ref{in:defucalm})$ holds}\}
$$
is called {\it modulus of calmness from above} of $\Phi$ at $\bar x$
(see \cite[Chapter 8.F]{RocWet98}).
The version from below of calmness, along with its modulus
denoted by $\lcalm{\Phi}{\bar x}$, is defined in an analogous way.
Note that, as far as working with single-valued mappings, calmness
as defined in $(\ref{def:calmsvmap})$ implies continuity,
as well as calmness from above/below implies the corresponding (topological)
semicontinuity property at the same point.
The reader should be warned that for the aforementioned one-side versions
of calmness the terminology usage is not yet standardized.
It is worth mentioning that properties very close to calmness
emerged under various names in connection with the study of stability in
parametric optimization. More precisely, in \cite{Peno04} the term
calm is used for meaning what is called here calm from below, the term quiet
for what is called here calm from above, and the term stable for meaning
calmness in the sense of $(\ref{def:calmsvmap})$. The varying terminology
reflects the long and nonlinear history of the concepts behind, but also
the spread interest in them.

\item {\bf Fact 4.} If $\Phi:X\rightrightarrows Y$ is Lipschitz l.s.c.
at $(\bar x,\bar y)$, then its inverse mapping $\Phi^{-1}:
Y\rightrightarrows X$  is hemiregular (alias, semiregular) at $(\bar y,
\bar x)$, as understood in \cite{CiFaKr19,Krug09,MoNaWa09,Uder18}.

\item {\bf Fact 5.} If $\Phi:X\rightrightarrows Y$ is calm
at $(\bar x,\bar y)$, then its inverse mapping $\Phi^{-1}:
Y\rightrightarrows X$ is metrically subregular at $(\bar y,\bar x)$
(see \cite[Theorem 3H.3]{DonRoc14}, \cite[Proposition 2.65]{Ioff18}).

\item {\bf Fact 6.} A set-valued mapping $\Phi:X\rightrightarrows Y$
between metric spaces is called locally Lipschitz near $\bar x\in X$,
provided that there exist positive constant $\delta$ and $\ell$,
such that
\begin{eqnarray}    \label{in:Liplocdef}
  \haus{\Phi(x_1)}{\Phi(x_2)} &=& \max\{\exc{\Phi(x_1)}{\Phi(x_2)},
  \exc{\Phi(x_2)}{\Phi(x_1)}\}    \nonumber \\
  &\le& \ell d(x_1,x_2),\quad\forall  x_1,\, x_2\in\ball{\bar x}{\delta},
  \end{eqnarray}
where $\exc{A}{B}=\sup_{a\in A}\dist{a}{B}$ stands for the excess
of the set $A$ beyond the set $B$, with $A$ and $B$ being subsets
of the same metric space.
The value
$$
  \Liploc{\Phi}{\bar x}=\inf\{\ell>0:\ \exists\delta>0
  \hbox{ for which $(\ref{in:Liplocdef})$ holds}\}
$$
is called modulus of local Lipschitz continuity of $\Phi$ around
$\bar x$.
It is clear that any mapping $\Phi$, which is locally Lipschitz near
$\bar x$, is also Lipschitz u.s.c. at $\bar x$, with $\Lipusc{\Phi}{\bar x}
\le\Liploc{\Phi}{\bar x}$. This implication can not be reversed
(see Example \ref{ex:PHvsLipusc}(ii)).

Whenever inequality in $(\ref{in:Liplocdef})$
remains true with the same $\ell>0$ for every $\delta>0$, $\Phi$ is
said to be Lipschitz continuous on $X$, and the related modulus is
denoted by $\Liploc{\Phi}{X}$.

\item {\bf Fact 7.}  Calmness for set-valued mappings is always implied by
Aubin property (a.k.a. Lipschitz-likeness), which is a manifestation
of the phenomenon of metric regularity playing a fundamental role
in modern variational analysis (see \cite[Chapter 3E]{DonRoc14}).

\end{itemize}

Valuable historical comments on the genesis, the development and the successful
applications of all the notions in Definition \ref{def:Lipsemcont},
from the viewpoint of some among the major contributors to the existing
theory, can be found in \cite{BorZhu05,DonRoc14,Ioff18,KlaKum02,
Mord06,Mord18,Peno13,RocWet98}.

An aspect that makes impossible any direct comparison of the
properties in Definition \ref{def:Lipsemcont}(i) and (ii) with
purely topological semicontinuity properties and with the properties
based on the Painlev\'e-Kuratowski convergence (see \cite[Chapter 3A]{DonRoc14}),
when revisited in metric spaces, is the reference to a point of the graph,
instead of to a point in the definition space.
The next example reveals that the notion in Definition \ref{def:Lipsemcont}(iii)
and the Pompeiu-Hausdorff continuity are independent of each other,
as one expects.
According to \cite[Chapter 3B]{DonRoc14} a set-valued mapping between
metric spaces $\Phi:X\rightrightarrows Y$ is said to be Pompeiu-Hausdorff
continuous at $\bar x\in\dom\Phi$ if
$$
  \lim_{x\to\bar x}\haus{\Phi(x)}{\Phi(\bar x)}=0.
$$

\begin{example}     \label{ex:PHvsLipusc}
(i) (Pompeiu-Hausdorff continuity without Lipschitz upper semicontinuity)
Let $X=Y=\R$ be equipped with its Euclidean metric structure. Define
a set-valued mapping $\Phi:\R\rightrightarrows\R$ as
$$
  \Phi(x)=[-\sqrt{|x|},\sqrt{|x|}],
$$
and take $\bar x=0$. Since it is $\Phi(0)=\{0\}$, so $\Phi(0)
\subseteq \Phi(x)$ for every $x\in\R$, and hence
$$
  \exc{\Phi(0)}{\Phi(x)}=0,\quad\forall x\in\R,
$$
whereas it is
$$
  \exc{\Phi(x)}{\Phi(0)}=\sqrt{|x|},\quad\forall x\in\R,
$$
clearly it holds $\lim_{x\to 0}\haus{\Phi(x)}{\Phi(0)}=0$.
Nevertheless, since for any $\ell>0$ there is no $\delta>0$
such that the inequality
$$
  \sqrt{|x|}\le\ell|x|, \quad\forall x\in\ball{0}{\delta}
$$
takes place, the inclusion
$$
   \Phi(x)\subseteq\ball{\{0\}}{\ell|x|}=[-\ell|x|,\ell|x|],
   \quad\forall x\in\ball{0}{\delta},
$$
fails to be true.

(ii) (Lipschitz upper semicontinuity without Pompeiu-Hausdorff continuity
and local Lipschitz continuity)
Let $X$ and $Y$ be as in the previous case. Define a set-valued mapping
$\Phi:\R\rightrightarrows\R$ by setting
$$
   \Phi(x)=\left\{\begin{array}{ll}
   (-\infty,0], & \hbox{ if } x<0,   \\
   \R, & \hbox{ if } x=0,   \\
   {}[0,+\infty) &  \hbox{ if } x>0,
   \end{array}  \right.
$$
and take $\bar x=0$. Since $\ball{\Phi(0)}{\ell|x|}=\R$ for every $x\in\R$
and $\ell>0$, the inclusion
$$
  \Phi(x)\subseteq\ball{\Phi(0)}{\ell|x|}=\R,\quad\forall x\in\ball{0}{\delta}
$$
is evidently satisfied for every $\ell,\, \delta>0$. Nonetheless, since it is
$\exc{\Phi(0)}{\Phi(x)}=+\infty$ for  every $x\in\R\backslash\{0\}$,
one has $\lim_{x\to 0}\haus{\Phi(x)}{\Phi(0)}=+\infty\ne 0$.

Since, whenever $x_1,\, x_2\in\R$ are such that $x_1<0<x_2$, one finds
$$
  \haus{\Phi(x_1)}{\Phi(x_2)}\ge\sup_{y\in (-\infty,0]}\dist{y}{[0,+\infty)}
  =+\infty,
$$
note that $\Phi$ fails also to be locally Lipschitz near $0$.
\end{example}


The analysis of the properties recalled in Definition \ref{def:Lipsemcont}
will be performed in the case of the set-valued mapping $\Solv:P\rightrightarrows X$,
describing the solution stability of problems $(\mathcal{SVI}_p)$
under parameter perturbation.
Since all of them have a purely metric nature, it seems to be
natural, at a first step, to approach this question ``iuxta
propria principia", namely in a metric space setting. A way
to accomplish such a task relies on the excess function
that can be associated to a problem $(\mathcal{SVI}_p)$, i.e.
the functional $\varphi_F:X\longrightarrow\R\cup\{\pm\infty\}$,
defined as
$$
   \varphi_F(p,x)=\exc{F(p,x)}{C}.
$$
Such an approach enables one to characterize the solution of
a $(\mathcal{SVI}_p)$ problem as a zero of $\varphi_F(p,\cdot)$, according
to a successful strategy in addressing different variational
problems, even in their metric space formulation. In fact,
modern variational analysis offers a well developed apparatus
of tools and techniques for carrying out a quantitative study of
the stability properties of zeroes of functionals.
The remaining part of the present section is devoted to gather
all those elements of set-valued and variational analysis,
which are needed to implement this approach in the subsequent
sections.
Let us start with a lemma that links semicontinuity properties
of a set-valued mapping $\Phi:X\rightrightarrows Y$ with those
of the related excess function $\varphi_\Phi:X\longrightarrow
\R\cup\{\pm\infty\}$.

\begin{lemma}     \label{lem:Phivarphi}
Let $\Phi:X\rightrightarrows Y$ be a set-valued mapping between
metric spaces and let $C\subseteq Y$ be a nonempty closed set.
Define $\varphi_\Phi:X\longrightarrow\R\cup\{\pm\infty\}$ as
$\varphi_\Phi(x)=\exc{\Phi(x)}{C}$.

(i) If $\Phi$ is l.s.c. at $\bar x$, then $\varphi_\Phi$ is l.s.c.
at the same point.

(ii) If $\Phi$ is Lipschitz u.s.c. at $\bar x$, then $\varphi_\Phi$
is calm from above at the same point and
\begin{equation}   \label{in:calmLipuscest}
  \ucalm{\varphi_\Phi}{\bar x}\le\Lipusc{\Phi}{\bar x}.
\end{equation}
In particular, if $\Phi$ is Lipschitz u.s.c. at $\bar x$, then
$\varphi_\Phi$ is u.s.c. at $\bar x$.
\end{lemma}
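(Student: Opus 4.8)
The plan is to treat the two assertions separately, in each case unwinding the definitions of the excess function $\varphi_\Phi(x)=\exc{\Phi(x)}{C}=\sup_{y\in\Phi(x)}\dist{y}{C}$ and reducing the semicontinuity claim about $\varphi_\Phi$ to a pointwise inequality between excesses $\exc{\Phi(x)}{C}$ and $\exc{\Phi(\bar x)}{C}$, which can in turn be controlled by the excess $\exc{\Phi(x)}{\Phi(\bar x)}$ or by the distances of individual points of $\Phi(x)$ from the set $\Phi(\bar x)$. The single key estimate underlying everything is the triangle-type inequality for the point-to-set distance: for any $y$ and any $z$, $\dist{y}{C}\le d(y,z)+\dist{z}{C}$; taking the supremum over $y\in\Phi(x)$ and choosing $z\in\Phi(\bar x)$ appropriately gives $\varphi_\Phi(x)\le\exc{\Phi(x)}{\Phi(\bar x)}+\varphi_\Phi(\bar x)$ whenever $\Phi(\bar x)\neq\varnothing$. (The case $\Phi(\bar x)=\varnothing$ will need a separate, easy remark, since then $\varphi_\Phi(\bar x)=-\infty$ or the statement is vacuous depending on the convention; I would dispatch it first.)

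For part (ii), Lipschitz upper semicontinuity of $\Phi$ at $\bar x$ gives, for $\ell>\Lipusc{\Phi}{\bar x}$, a $\delta>0$ with $\Phi(x)\subseteq\ball{\Phi(\bar x)}{\ell d(x,\bar x)}$ for all $x\in\ball{\bar x}{\delta}$, i.e. $\exc{\Phi(x)}{\Phi(\bar x)}\le\ell d(x,\bar x)$. Feeding this into the triangle estimate above yields $\varphi_\Phi(x)-\varphi_\Phi(\bar x)\le\ell d(x,\bar x)$ on $\ball{\bar x}{\delta}$, which is exactly calmness from above with constant $\ell$; letting $\ell\downarrow\Lipusc{\Phi}{\bar x}$ gives the modulus bound \eqref{in:calmLipuscest}. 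The ``in particular'' clause is then immediate, since calmness from above forces $\limsup_{x\to\bar x}\varphi_\Phi(x)\le\varphi_\Phi(\bar x)$, which is upper semicontinuity; alternatively it follows by quoting the last sentence of Fact 3.

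For part (i), I would argue directly from the definition of lower semicontinuity of a set-valued mapping: if $\Phi$ is l.s.c. at $\bar x$, then for every open $V$ meeting $\Phi(\bar x)$ there is a neighborhood $U$ of $\bar x$ with $\Phi(x)\cap V\neq\varnothing$ for $x\in U$. Fix $\lambda<\varphi_\Phi(\bar x)$; I want to show $\varphi_\Phi(x)>\lambda$ near $\bar x$. Pick $\bar y\in\Phi(\bar x)$ with $\dist{\bar y}{C}>\lambda$, then pick $\varepsilon>0$ small enough that $\dist{y}{C}>\lambda$ for all $y\in\ball{\bar y}{\varepsilon}$ (possible because $z\mapsto\dist{z}{C}$ is $1$-Lipschitz). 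Applying lower semicontinuity with $V=\inte\ball{\bar y}{\varepsilon}$ produces a neighborhood $U$ of $\bar x$ on which $\Phi(x)$ contains some point of $\ball{\bar y}{\varepsilon}$, hence $\varphi_\Phi(x)=\sup_{y\in\Phi(x)}\dist{y}{C}>\lambda$. Since $\lambda<\varphi_\Phi(\bar x)$ was arbitrary, $\liminf_{x\to\bar x}\varphi_\Phi(x)\ge\varphi_\Phi(\bar x)$, i.e. $\varphi_\Phi$ is l.s.c. at $\bar x$; the boundary case $\varphi_\Phi(\bar x)=-\infty$ or $\Phi(\bar x)=\varnothing$ is trivial.

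I do not anticipate a serious obstacle: the content is a careful bookkeeping exercise. The one point demanding attention is the handling of infinite values and empty images of $\Phi$ — making sure the inequalities and the notions of l.s.c./calm from above are interpreted correctly in $\R\cup\{\pm\infty\}$ — and making the supremum manipulation in the triangle estimate rigorous when $\varphi_\Phi(\bar x)=+\infty$ (in which case the calmness-from-above inequality in (ii) holds vacuously, so nothing is lost).
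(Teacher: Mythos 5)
Your proposal is correct and, for part (ii), follows exactly the paper's argument: the triangle inequality for the excess combined with the defining inclusion of Lipschitz upper semicontinuity, then letting $\ell\downarrow\Lipusc{\Phi}{\bar x}$. For part (i) the paper does not give a proof at all but defers to \cite[Lemma 2.3]{Uder19} (noting only that the argument there survives in a pure metric setting); the direct $\lambda$-level argument you supply, using the $1$-Lipschitz continuity of $y\mapsto\dist{y}{C}$ and the topological lower semicontinuity of $\Phi$, is the standard proof and is exactly what that citation stands in for, so there is nothing to object to.
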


\begin{proof}
(i) The proof is given in full detail in \cite[Lemma 2.3]{Uder19}, upon the
assumptions that $Y$ is a normed space and $C$ is a closed, convex
cone of it. A perusal of the argument employed there reveals that
neither the convexity of $C$ nor the linear structure of $C$
are actually exploited in the proof, relying instead on basic
definitions and inequalities valid in any metric space.

(ii) Take an arbitrary $\ell>\Lipusc{\Phi}{\bar x}$. Then, there
exists $\delta>0$ such that inclusion $(\ref{in:defLipusc})$
holds. Consequently, by using the triangular inequality for
the excess, one obtains
\begin{eqnarray*}
  \varphi_\Phi(x)=\exc{\Phi(x)}{C}\le\exc{\Phi(x)}{\Phi(\bar x)}+
  \exc{\Phi(\bar x)}{C}\le\ell d(x,\bar x)+\varphi_\Phi(\bar x),
  \quad\forall x\in\ball{\bar x}{\delta}.
\end{eqnarray*}
This shows that $\varphi_\Phi$ is calm from above at $\bar x$ and
that $\ucalm{\varphi_\Phi}{\bar x}\le\ell$. The estimate in
$(\ref{in:calmLipuscest})$ is true because of the arbitrariness
of $\ell>\Lipusc{\Phi}{\bar x}$.

The last statement in the thesis is a consequence of fact that,
as remarked in Fact 3,
calmness from above for a single-real-valued function implies
upper semicontinuity. Indeed, passing to the $\limsup$ in the
above inequality, one finds
$$
  \limsup_{x\to\bar x}\varphi_\Phi(x)\le\limsup_{x\to\bar x}\,
  [\ell d(x,\bar x)+\varphi_\Phi(\bar x)]=\varphi_\Phi(\bar x).
$$
This completes the proof.
\end{proof}

\begin{remark}     \label{rem:varphirealval}
In several circumstances, it will be proper to work with
an excess function $\varphi_\Phi$, which is real valued on $X$. It is readily
seen that this happens if the two following assumptions are
taken on $\Phi$:

(i) $\dom\Phi=X$ (so that $\varphi_\Phi(X)\subseteq[0,+\infty]$);

(ii) the set-valued mapping $\Phi$ is {\it bounded-valued away from}
the set $C$, meaning that for every $x\in X$ the set $\Phi(x)
\backslash C$ is bounded as a subset of a metric space (so that
$\varphi_\Phi(x)<+\infty$, for every $x\in X$).
\end{remark}

In order to formalize the differential conditions, upon which Lipschitz
semicontinuity properties of $\Solv$ will be established, the notion of
strict outer slope plays a crucial role.
Such tool of analysis in metric spaces can be presented as a regularization
of the well-known notion of (strong) slope, which was introduced in \cite{DeMaTo80}.
Given a function $\psi:X\longrightarrow\R\cup\{\pm\infty\}$,
defined on a metric space $(X,d)$, and $\bar x\in\psi^{-1}(\R)$, the
real-extended value
$$
   \stsl{\psi}(\bar x)=\left\{ \begin{array}{ll}
                       0, & \qquad\text{if $\bar x$ is a local minimizer for $\psi$},  \\
                       \displaystyle\limsup_{x\to\bar x}
                       \frac{\psi(\bar x)-\psi(x)}{d(x,\bar x)}, &
                       \qquad\text{otherwise,}
                 \end{array}
   \right.
$$
is called ({\it strong}) {\it slope} of $\psi$ at $\bar x$.
The real-extended value
\begin{eqnarray*}
    \sostsl{\psi}(\bar x) &=& \lim_{\epsilon\to 0^+}\inf
    \{\stsl{\psi}(x): x\in \ball{\bar x}{\epsilon},\ \psi(\bar x)<
     \psi(x)<\psi(\bar x)+\epsilon\}   \\
      &=& \liminf_{x\to\bar x\atop \psi(x)\downarrow\psi(\bar x)}
      \stsl{\psi}(x)
\end{eqnarray*}
is called {\it strict outer slope} of $\psi$ at $\bar x$.

\begin{example}    \label{ex:stsl}
(i) It is well known that, whenever a function $\psi:\X\longrightarrow\R
\cup\{\pm\infty\}$, defined on a Banach space, is Fr\'echet differentiable
at $x\in\psi^{-1}(\R)$, with Fr\'echet derivative $\Fder\psi(x)$, then
its strong slope at $x$ can be readily calculated as $\stsl{\psi}(x)=
\|\Fder\psi(x)\|$ (see, for instance, \cite[Chapter 1.2]{Ioff00}).

(ii) In view of the analysis exposed in Section \ref{Sect:5}, it is
useful to mention that for a function $\psi:\X\longrightarrow\R
\cup\{\pm\infty\}$ defined in a Banach space, which is l.s.c. and
convex, it holds
$$
    \stsl{\psi}(x)=\dist{\nullv^*}{\partial\psi(x)},
    \quad\forall x\in\psi^{-1}(\R),
$$
where $\partial\psi(x)$ denotes the subdifferential of $\psi$ at
$x$, in the sense of convex analysis, and $\nullv^*$ stands for the null
element of the dual space $\X^*$ to $\X$. In such case, the value
$\dist{\nullv^*}{\partial\psi(x)}$ is often referred to as the
subdifferential slope of $\psi$ at $x$ (see, for instance,
\cite[Theorem 5]{FaHeKrOu10}).
\end{example}

For further reading on the theme of strong slope and its several
variations the reader is refereed to \cite[Chapter 3.1.2]{Ioff18},
\cite{FaHeKrOu10} and \cite[Chapter 1.6]{Peno13}.

\vskip1cm


\section{Conditions for Lipschitz semicontinuity of $\Solv$}   \label{Sect:3}

Throughout the current section, the equality $\dom F=P\times X$ will be
kept in force as a standing assumption on the set-valued mapping $F$,
which will be supposed also to take closed values.
Before discussing the Lipschitz semicontinuity properties of $\Solv$,
it is worthwhile to come back to the equality
\begin{equation}    \label{eq:levsetupinvchar}
  \Solv(p)=[\varphi_F(p,\cdot)\le 0]=F(p,\cdot)^{+1}(C).
\end{equation}
A topological consequence of it is that, whenever the set-valued mapping $F(p,\cdot):
X\rightrightarrows Y$ is supposed to be l.s.c. on $X$ for
every fixed $p\in P$, the set $\Solv(p)=X\backslash[\varphi_F(p,\cdot)>0]$
is closed (possibly, empty), by virtue of the lower
semicontinuity of the functional $\varphi_F(p,\cdot)$
(remember Lemma \ref{lem:Phivarphi}(i)). Thus, upon the
above condition, $\Solv$ turns out to be closed-valued.

In the statement of the next result, the following partial version of
the strict outer slope will be employed:
\begin{eqnarray}    \label{def:sostslx}
  \sostslx{\varphi_F}(\bar p,\bar x) &=&\lim_{\epsilon\to 0^+}\inf
    \{\stsl{\varphi_F(p,\cdot)}(x):\ (p,x)\in \ball{\bar p}{\epsilon}\times
    \ball{\bar x}{\epsilon}, \\  \nonumber
    & &\hskip4.5cm \varphi_F(\bar p,\bar x)<\varphi_F(p,x)<\varphi_F
    (\bar p,\bar x)+\epsilon\} \\  \nonumber
    &=&  \liminf_{(p,x)\to(\bar p,\bar x)\atop \varphi_F(p,x)\downarrow \varphi_F(\bar p,\bar x)}
    \stsl{\varphi_F(p,\cdot)}(x).
\end{eqnarray}

\begin{theorem}[Lipschitz lower semicontinuity of $\Solv$]    \label{thm:LiplscSolv}
With reference to a parameterized set-valued inclusion $(\mathcal{SVI}_p)$,
let $\bar p\in P$ and let $\bar x\in\Solv(\bar p)$. Suppose that:

(i) $(X,d)$ is metrically complete;

(ii) the mapping $F(\cdot,\bar x):P\rightrightarrows Y$ is
Lipschitz u.s.c. at $\bar p$;

(iii) there exists $\delta>0$ such that, for every $p\in\ball{\bar p}{\delta}$,
each mapping $F(p,\cdot):X\rightrightarrows Y$ is l.s.c. on $X$;

(iv) it holds $\sostslx{\varphi_F}(\bar p,\bar x)>0$.

\noindent Then, the solution mapping $\Solv:P\rightrightarrows X$
is Lipschitz l.s.c. at $(\bar p,\bar x)$ and the following estimate holds
\begin{equation}    \label{in:LiplscSolv}
  \Liplsc{\Solv}{\bar p}{\bar x}\le{\Lipusc{F(\cdot,\bar x)}{\bar p}
  \over  \sostslx{\varphi_F}(\bar p,\bar x)}.
\end{equation}

\end{theorem}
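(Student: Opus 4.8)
The plan is to exploit the characterization $\Solv(p)=[\varphi_F(p,\cdot)\le 0]$ recorded in $(\ref{eq:levsetupinvchar})$: for each parameter $p$ close to $\bar p$ I would manufacture a zero of the merit function $\varphi_F(p,\cdot)$ lying at distance from $\bar x$ of order $\Lipusc{F(\cdot,\bar x)}{\bar p}/\sostslx{\varphi_F}(\bar p,\bar x)$ times $d(p,\bar p)$. The engine is Ekeland's variational principle, applied to $\varphi_F(p,\cdot)$ on the complete metric space $(X,d)$; hypothesis (iv) is what guarantees that the Ekeland point cannot sit at a positive level of $\varphi_F(p,\cdot)$, hence that it solves $(\mathcal{SVI}_p)$.

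I would first collect the preliminaries. Since $\bar x\in\Solv(\bar p)$ we have $F(\bar p,\bar x)\subseteq C$, so $\varphi_F(\bar p,\bar x)=\exc{F(\bar p,\bar x)}{C}=0$; moreover $\varphi_F(p,\cdot)\ge 0$ on $X$, it is l.s.c.\ on $X$ for $p$ near $\bar p$ by (iii) and Lemma \ref{lem:Phivarphi}(i), and it is finite at $\bar x$ for $p$ near $\bar p$ (see below), hence proper. Fix now arbitrary $\sigma\in(0,\sostslx{\varphi_F}(\bar p,\bar x))$ and $\kappa>\Lipusc{F(\cdot,\bar x)}{\bar p}$; it suffices to show that $\Solv$ is Lipschitz l.s.c.\ at $(\bar p,\bar x)$ with $\Liplsc{\Solv}{\bar p}{\bar x}\le\kappa/\sigma$, because then $(\ref{in:LiplscSolv})$ follows on letting $\sigma\uparrow\sostslx{\varphi_F}(\bar p,\bar x)$ and $\kappa\downarrow\Lipusc{F(\cdot,\bar x)}{\bar p}$. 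By the definition $(\ref{def:sostslx})$ of the partial strict outer slope and the equality $\varphi_F(\bar p,\bar x)=0$, there is $\epsilon>0$ such that
$$\stsl{\varphi_F(p,\cdot)}(x)>\sigma\ \text{ whenever }\ (p,x)\in\ball{\bar p}{\epsilon}\times\ball{\bar x}{\epsilon}\ \text{ and }\ 0<\varphi_F(p,x)<\epsilon;$$
and by (ii) together with Lemma \ref{lem:Phivarphi}(ii) the function $\varphi_F(\cdot,\bar x)$ is calm from above at $\bar p$, so there is $\delta_0>0$ with $\varphi_F(p,\bar x)\le\kappa d(p,\bar p)<+\infty$ for all $p\in\ball{\bar p}{\delta_0}$. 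I would then fix $\delta\in(0,\min\{\delta_0,\epsilon\})$ with $\ball{\bar p}{\delta}$ inside the neighbourhood on which (iii) holds and with $\kappa\delta<\epsilon\min\{1,\sigma\}$.

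The core step is the following. Fix $p\in\ball{\bar p}{\delta}$; if $\varphi_F(p,\bar x)=0$ then $\bar x\in\Solv(p)$ and nothing has to be proved, so assume $0<\varphi_F(p,\bar x)\le\kappa d(p,\bar p)$. Apply Ekeland's variational principle to the proper, l.s.c., nonnegative function $\varphi_F(p,\cdot)$ with $\bar x$ as an approximate minimizer at level $\varphi_F(p,\bar x)$ and with radius $\varphi_F(p,\bar x)/\sigma$: this yields $x_p\in X$ with $\varphi_F(p,x_p)\le\varphi_F(p,\bar x)$, $d(x_p,\bar x)\le\varphi_F(p,\bar x)/\sigma\le(\kappa/\sigma)d(p,\bar p)$, and $\varphi_F(p,x)>\varphi_F(p,x_p)-\sigma d(x,x_p)$ for all $x\ne x_p$, the last inequality forcing $\stsl{\varphi_F(p,\cdot)}(x_p)\le\sigma$. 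If one had $\varphi_F(p,x_p)>0$, then $p\in\ball{\bar p}{\epsilon}$, $d(x_p,\bar x)\le(\kappa/\sigma)\delta<\epsilon$ and $0<\varphi_F(p,x_p)\le\kappa\delta<\epsilon$, so $(p,x_p)$ would belong to the region above on which $\stsl{\varphi_F(p,\cdot)}(x_p)>\sigma$ --- a contradiction. Therefore $\varphi_F(p,x_p)\le 0$, i.e.\ $x_p\in\Solv(p)$, and $\Solv(p)\cap\ball{\bar x}{(\kappa/\sigma)d(p,\bar p)}\ne\varnothing$ for every $p\in\ball{\bar p}{\delta}$, which is precisely the asserted Lipschitz lower semicontinuity of $\Solv$ at $(\bar p,\bar x)$ with modulus at most $\kappa/\sigma$.

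The step I expect to require the most care is the quantitative bookkeeping that keeps the Ekeland point $x_p$, and the value $\varphi_F(p,x_p)$, inside the neighbourhood $\ball{\bar p}{\epsilon}\times\ball{\bar x}{\epsilon}\cap\{0<\varphi_F<\epsilon\}$ on which (iv) supplies the lower bound on the slope: it is exactly this requirement (here $\kappa\delta<\epsilon\min\{1,\sigma\}$) that ties the radius used in Ekeland's principle to the modulus estimate and produces the sharp constant in $(\ref{in:LiplscSolv})$. Minor points to dispatch are the degenerate cases $\Lipusc{F(\cdot,\bar x)}{\bar p}=0$ or $\sostslx{\varphi_F}(\bar p,\bar x)=+\infty$ --- where the right-hand side of $(\ref{in:LiplscSolv})$ is $0$ and the same argument applies verbatim --- and the properness of $\varphi_F(p,\cdot)$ needed to invoke Ekeland's principle, both secured by the finiteness $\varphi_F(p,\bar x)\le\kappa d(p,\bar p)<+\infty$ coming from (ii).
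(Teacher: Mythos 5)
Your proposal is correct and follows essentially the same route as the paper: apply Ekeland's variational principle to $\varphi_F(p,\cdot)$ with $\bar x$ as an approximate minimizer at level $\varphi_F(p,\bar x)\le\kappa\,d(p,\bar p)$ (obtained from hypothesis (ii) via Lemma \ref{lem:Phivarphi}(ii)) and radius proportional to $d(p,\bar p)/\sigma$, then use hypothesis (iv) to rule out a positive value at the Ekeland point. The only differences are cosmetic choices in the quantitative bookkeeping (your condition $\kappa\delta<\epsilon\min\{1,\sigma\}$ versus the paper's $\delta_\ell<\min\{\sigma\eta/(2(\ell+1)),\eta/(2(\ell+1))\}$ with $\epsilon=\eta/2$), which serve the same purpose of keeping $(p,x_p)$ and $\varphi_F(p,x_p)$ inside the region on which the slope bound applies.
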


\begin{proof}
By hypothesis (iv), it is possible to fix the value of $\sigma$ in such
a way that
$$
   \sostslx{\varphi_F}(\bar p,\bar x)>\sigma>0.
$$
According to the definition of strict outer slope,
the above inequality means that there exists $\eta>0$ such that
for every $\epsilon\in (0,\eta)$ it holds
\begin{equation}     \label{in:stslpos}
    \stsl{\varphi_F(p,\cdot)}(x)>\sigma,\quad
    \forall (p,x)\in\ball{\bar p}{\epsilon}\times\ball{\bar x}{\epsilon}:\
    0<\varphi_F(p,x)<\epsilon.
\end{equation}
Without any loss of generality, it is possible to assume that
$\eta<\delta$, where $\delta$ is as in hypothesis (iii).
In the light of Lemma \ref{lem:Phivarphi}(ii), from the hypothesis (ii)
one deduces that the function $p\mapsto\varphi_F(p,\bar x)$ is
calm from above at $\bar p$, with $\ucalm{\varphi_F(\cdot,\bar x)}{\bar p}
\le\Lipusc{F(\cdot,\bar x)}{\bar p}$. Thus, taken an arbitrary
$\ell>\Lipusc{F(\cdot,\bar x)}{\bar p}$, there exists $\delta_\ell>0$
such that
\begin{equation}     \label{in:phibarxval}
  \varphi_F(p,\bar x)\le\varphi_F(\bar p,\bar x)+\ell d(p,\bar p)
  =\ell d(p,\bar p),\quad\forall p\in\ball{\bar p}{\delta_\ell}.
\end{equation}
Notice that, by reducing its value if needed, it is possible to
pick $\delta_\ell$ in such a way that
\begin{equation}    \label{in:deltal}
   \delta_\ell<\min\left\{{\sigma\eta\over
   2(\ell+1)},\, {\eta\over 2(\ell+1)}\right\}.
\end{equation}
With such a choice, one has, in particular, $\delta_\ell<{\eta\over 2}$.
Now, fix an arbitrary $p\in\ball{\bar p}{\delta_\ell}\backslash
\{\bar p\}$ and consider the function $\varphi_F(p,\cdot):X\longrightarrow
[0,+\infty]$. Since it is $\delta_\ell<\eta<\delta$, then by virtue of Lemma \ref{lem:Phivarphi}(i)
and the hypothesis (iii), this function is l.s.c. on $X$. It is clearly
bounded from below. Moreover, by taking into account inequality (\ref{in:phibarxval}),
one has
$$
  \varphi_F(p,\bar x)\le\inf_{x\in X}\varphi_F(p,x)+\ell d(p,\bar p).
$$
By the Ekeland variational principle, corresponding to
$$
  \lambda={\ell d(p,\bar p)\over\sigma},
$$
there exists $x_\lambda\in X$ with the following properties:
\begin{equation}   \label{in:EVP1}
    \varphi_F(p,x_\lambda)\le\varphi_F(p,\bar x)\le\ell d(p,\bar p),
\end{equation}
\begin{equation}   \label{in:EVP2}
   d(x_\lambda,\bar x)\le\lambda,
\end{equation}
and
\begin{equation}   \label{in:EVP3}
    \varphi_F(p,x_\lambda)<\varphi_F(p,x)+\sigma d(x,x_\lambda),
    \quad\forall x\in X\backslash\{x_\lambda\}.
\end{equation}
Under the current assumptions, the above properties allow one
to show that it is $\varphi_F(p,x_\lambda)=0$.
Indeed, suppose, ab absurdo, that it is $\varphi_F(p,x_\lambda)>0$.
As a consequence of inequality $(\ref{in:EVP3})$, one has
$$
  {\varphi_F(p,x_\lambda)-\varphi_F(p,x)\over d(x,x_\lambda)}<\sigma,
  \quad\forall x\in X\backslash\{x_\lambda\},
$$
whence one obtains
\begin{equation}    \label{in:stslcontr}
  \stsl{\varphi_F(p,\cdot)}(x_\lambda)=\lim_{r\to 0^+}
  \sup_{x\in\ball{x_\lambda}{r}\backslash\{x_\lambda\}}
  {\varphi_F(p,x_\lambda)-\varphi_F(p,x)\over d(x,x_\lambda)}
    \le\sigma.
\end{equation}
On the other hand, by recalling inequality $(\ref{in:deltal})$,
one sees that
\begin{equation}   \label{in:pcontr}
   d(p,\bar p)\le\delta_\ell<{\eta\over 2},
\end{equation}
and, on account of inequality $(\ref{in:EVP2})$,
\begin{equation}   \label{in:xcontr}
  d(x_\lambda,\bar x)\le{\ell d(p,\bar p)\over\sigma}
  <{\ell\over\sigma}\cdot{\sigma\eta\over
   2(\ell+1)}<{\eta\over 2}.
\end{equation}
Besides, because of inequalities $(\ref{in:deltal})$ and $(\ref{in:EVP1})$,
it is true that
\begin{equation}    \label{in:varphicontr}
  \varphi_F(p,x_\lambda)\le\ell\delta_\ell<\ell{\eta\over
   2(\ell+1)}<{\eta\over 2}.
\end{equation}
Inequality $(\ref{in:stslcontr})$, along with inequalities $(\ref{in:pcontr})$,
$(\ref{in:xcontr})$, and $(\ref{in:varphicontr})$, contradicts $(\ref{in:stslpos})$
if taking $\epsilon=\eta/2$.
The above argument proves that it is actually $\varphi_F(p,x_\lambda)=0$,
which means that $F(p,x_\lambda)\subseteq C$, and hence
$x_\lambda\in\Solv(p)$.
Since it is $d(x_\lambda,\bar x)\le{\ell d(p,\bar p)\over\sigma}$,
it results in
$$
   \Solv(p)\cap\ball{\bar x}{{\ell d(p,\bar p)\over\sigma}}
   \ne\varnothing.
$$
By arbitrariness of $p\in\ball{\bar p}{\delta_\ell}\backslash
\{\bar p\}$, the last relation amounts to say that $\Solv$ is Lipschitz
l.s.c. at $(\bar p,\bar x)$, and it holds
$$
   \Liplsc{\Solv}{\bar p}{\bar x}\le{\ell\over\sigma}.
$$
The arbitrariness of $\ell>\Lipusc{F(\cdot,\bar x)}{\bar p}$ and
of $\sigma<\sostslx{\varphi_F}(\bar p,\bar x)$ enables one to achieve
the estimate in the thesis, thereby completing the proof.
\end{proof}

The reader should notice that Theorem \ref{thm:LiplscSolv} provides
a condition for the local solvability of problems $(\mathcal{SVI}_p)$
under parameter perturbation. Furthermore, through the estimate
$(\ref{in:LiplscSolv})$, it affords quantitative information on the
stability of the solution mapping. Unfortunately, as happens for many
implicit function theorems, the differential condition upon which
it can be established (hypothesis (iv) in Theorem \ref{thm:LiplscSolv})
is only sufficient. This fact is illustrated by the next example.

\begin{example}     \label{ex:difcondnotsuff}
Let $P=X=Y=\R$ be endowed with its usual Euclidean metric structure,
let $C=[0,+\infty)$ and let $F:\R\times\R\rightrightarrows\R$ be defined
by
$$
  F(p,x)=[p^3+x^3,+\infty).
$$
Since $F(p,x)\subseteq C$ iff $p^3+x^3=(p+x)(p^2-px+x^2)\ge 0$, in this
case the solution mapping $\Solv:\R\rightrightarrows\R$ associated
with the inclusion problem takes the simple form
$$
  \Solv(p)=[-p,+\infty),\quad\forall p\in\R.
$$
In particular, one sees that, letting $\bar p=\bar x=0$, it is
$0\in\Solv(0)$.
Since $F$ is the epigraphical set-valued mapping related to the continuous
function $(p,x)\mapsto p^3+x^3$, it is l.s.c. on $\R\times\R$. As a
consequence, there exists $\delta>0$ such that each set-valued mapping
$x\leadsto F(p,x)$ is l.s.c. on $\R$, for every $p\in\ball{0}{\delta}$.
To check the Lipschitz lower semicontinuity of the set-valued mapping
$p\leadsto F(p,0)=[p^3,+\infty)$ mentioned in the hypothesis (ii) of
Theorem \ref{thm:LiplscSolv}, observe that
$$
  F(p,0)\subseteq F(0,0)=[0,+\infty),\quad\forall p\ge 0.
$$
For any $p\in (0,1)$, as it is $p^3>-|p|$, one finds
$$
  F(p,0)=[p^3,+\infty)\subseteq [-|p|,+\infty)=
  \ball{F(0,0)}{|p|}.
$$
Thus, the set-valued mapping $p\leadsto F(p,0)=[p^3,+\infty)$
is Lipschitz u.s.c. at $0$, with $\Lipusc{F(\cdot,0)}{0}\le 1$.
It is readily seen that the function $\varphi_F:\R\times\R\longrightarrow\R$
is given in the present circumstance by
\begin{eqnarray*}
  \varphi_F(p,x) &=& \exc{[p^3+x^3,+\infty)}{[0,+\infty)} \\
  &=&  \left\{ \begin{array}{ll}
                       0, & \qquad\hbox{ if } (p,x)\in\R\times\R:\ p+x\ge 0,  \\
                       -(p^3+x^3), &   \qquad\hbox{ if } (p,x)\in\R\times\R:\
                       p+x<0.
                 \end{array}
   \right.
\end{eqnarray*}
Recalling the strong slope estimate remarked in Example \ref{ex:stsl}(i),
one has
$$
  \stsl{\varphi_F(p,\cdot)}(x)=\left|{\partial\over\partial x}\varphi_F(p,x)\right|=
  \left\{ \begin{array}{ll}
                       0, & \qquad\hbox{ if } (p,x)\in\R\times\R:\ p+x> 0,  \\
                       3x^2, &   \qquad\hbox{ if } (p,x)\in\R\times\R:\
                       p+x<0.
                 \end{array}
   \right.
$$
Consequently, according to the definition of strict outer slope,
one finds
$$
  \sostslx{\varphi_F}(0,0)=0,
$$
because points $(p,x)$ such that $p+x<0$ and $x=0$ (hence such that
$\stsl{\varphi_F}(p,x)=0$) can be found in each set $((-\epsilon,\epsilon)
\times(-\epsilon,\epsilon))\cap\{(p,x)\in\R\times\R:\ 0<-(p^3+x^3)<
\epsilon\}$, with $\epsilon>0$. This shows that hypothesis (iv)
of Theorem \ref{thm:LiplscSolv} is not satisfied.
Nonetheless, the mapping $\Solv$ turns out to be Lipschitz l.s.c.
at $(0,0)$. Indeed, directly from the expression of $\Solv$,
one sees that
$$
   \Solv(p)\cap\ball{0}{|p|}\ne\varnothing,\quad\forall p\in\R,
$$
so $\Liplsc{\Solv}{0}{0}\le 1$.
\end{example}


\begin{theorem}[Calmness of $\Solv$]    \label{thm:calmSolv}
With reference to a parameterized set-valued inclusion $(\mathcal{SVI}_p)$,
let $\bar p\in P$ and let $\bar x\in\Solv(\bar p)$. Suppose that:

(i) $(X,d)$ is metrically complete;

(ii) the set-valued mapping $F(\bar p,\cdot):X\rightrightarrows Y$
is l.s.c. on $X$;

(iii) the mapping $F$ is locally Lipschitz near $(\bar p,\bar x)$;

(iv) it holds $\sostsl{\varphi_F(\bar p,\cdot)}(\bar x)>0$.

\noindent Then, the solution mapping $\Solv:P\rightrightarrows X$
is calm at $(\bar p,\bar x)$ and the following estimate holds
$$
  \calm{\Solv}{\bar p}{\bar x}\le{\Lip{F}{\bar p}{\bar x}
  \over\sostsl{\varphi_F(\bar p,\cdot)}(\bar x)}.
$$

\end{theorem}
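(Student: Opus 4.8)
The plan is to run the same Ekeland-based scheme as in the proof of Theorem~\ref{thm:LiplscSolv}, but in the reverse direction: instead of producing a solution of the perturbed problem near $\bar x$, I will take a solution $x$ of $(\mathcal{SVI}_p)$ lying close to $\bar x$ and correct it to a nearby solution of $(\mathcal{SVI}_{\bar p})$. Since $\dom F=P\times X$ and $\bar x\in\Solv(\bar p)$, the function $\varphi_F(\bar p,\cdot):X\longrightarrow[0,+\infty]$ is nonnegative with $\varphi_F(\bar p,\bar x)=0$, so $\inf_{x\in X}\varphi_F(\bar p,x)=0$, and it is l.s.c. on $X$ by hypothesis (ii) and Lemma~\ref{lem:Phivarphi}(i). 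Using hypothesis (iv), fix $\sigma$ with $0<\sigma<\sostsl{\varphi_F(\bar p,\cdot)}(\bar x)$; by the definition of strict outer slope there is $\eta>0$ such that $\stsl{\varphi_F(\bar p,\cdot)}(x)>\sigma$ whenever $d(x,\bar x)<\eta$ and $0<\varphi_F(\bar p,x)<\eta$. Fix also $L>\Lip{F}{\bar p}{\bar x}$ together with a radius $\rho>0$ on which the inequality defining local Lipschitz continuity of $F$ near $(\bar p,\bar x)$ holds with constant $L$.

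The first step is a ``pre-calmness" estimate for the excess function: if $p\in\ball{\bar p}{\delta}$ and $x\in\ball{\bar x}{\zeta}$ (with $\delta,\zeta$ to be fixed small enough that $(\bar p,x)$ and $(p,x)$ both lie in the ball of radius $\rho$ around $(\bar p,\bar x)$) and $F(p,x)\subseteq C$, then, by the triangular inequality for the excess together with hypothesis (iii),
$$
  \varphi_F(\bar p,x)=\exc{F(\bar p,x)}{C}\le\exc{F(\bar p,x)}{F(p,x)}+\exc{F(p,x)}{C}\le L\,d(p,\bar p),
$$
since $\varphi_F(p,x)=\exc{F(p,x)}{C}=0$. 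Hence $x$ is an $L\,d(p,\bar p)$-approximate minimizer of $\varphi_F(\bar p,\cdot)$, and the Ekeland variational principle, applicable because of hypothesis (i) and the properties of $\varphi_F(\bar p,\cdot)$ recalled above, applied with $\lambda=L\,d(p,\bar p)/\sigma$ yields $x_\lambda\in X$ such that $\varphi_F(\bar p,x_\lambda)\le\varphi_F(\bar p,x)\le L\,d(p,\bar p)$, $d(x_\lambda,x)\le L\,d(p,\bar p)/\sigma$, and $\varphi_F(\bar p,x_\lambda)<\varphi_F(\bar p,x')+\sigma\,d(x',x_\lambda)$ for every $x'\ne x_\lambda$.

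The second step fixes the constants $\delta$ and $\zeta$ in Definition~\ref{def:Lipsemcont}(ii). Since both are at our disposal, choose $\delta,\zeta>0$ so small that $(\bar p,x)$ and $(p,x)$ lie in the $\rho$-ball around $(\bar p,\bar x)$ for all $p\in\ball{\bar p}{\delta}$ and $x\in\ball{\bar x}{\zeta}$, and moreover $L\delta<\eta/2$ and $L\delta/\sigma+\zeta<\eta/2$. Then, for such $p$ and for $x\in\Solv(p)\cap\ball{\bar x}{\zeta}$, the point $x_\lambda$ above satisfies $d(x_\lambda,\bar x)\le d(x_\lambda,x)+d(x,\bar x)<L\delta/\sigma+\zeta<\eta/2$ and $\varphi_F(\bar p,x_\lambda)\le L\delta<\eta/2$. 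If $\varphi_F(\bar p,x_\lambda)>0$, the third Ekeland property forces $\stsl{\varphi_F(\bar p,\cdot)}(x_\lambda)\le\sigma$, contradicting the choice of $\eta$ (take $\epsilon=\eta/2$). Therefore $\varphi_F(\bar p,x_\lambda)=0$, i.e. $F(\bar p,x_\lambda)\subseteq C$, so $x_\lambda\in\Solv(\bar p)$ and $\dist{x}{\Solv(\bar p)}\le d(x,x_\lambda)\le (L/\sigma)\,d(p,\bar p)$. By arbitrariness of $x$, this gives
$$
  \Solv(p)\cap\ball{\bar x}{\zeta}\subseteq\ball{\Solv(\bar p)}{(L/\sigma)\,d(p,\bar p)},\quad\forall p\in\ball{\bar p}{\delta},
$$
so $\Solv$ is calm at $(\bar p,\bar x)$ with $\calm{\Solv}{\bar p}{\bar x}\le L/\sigma$; letting $L\downarrow\Lip{F}{\bar p}{\bar x}$ and $\sigma\uparrow\sostsl{\varphi_F(\bar p,\cdot)}(\bar x)$ yields the claimed estimate.

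I expect the delicate point to be the bookkeeping around the radius $\rho$: unlike in Theorem~\ref{thm:LiplscSolv}, where the reference to a point of the graph carried the needed information for free, here one must genuinely exploit the freedom to shrink $\zeta$ so that \emph{every} $x$ with $d(x,\bar x)\le\zeta$ (not just $x=\bar x$) keeps both $(\bar p,x)$ and $(p,x)$ inside the region where hypothesis (iii) is effective, while simultaneously keeping $x_\lambda$ and $\varphi_F(\bar p,x_\lambda)$ small enough to trigger the contradiction. A minor auxiliary check is that $\varphi_F(\bar p,x)$ is finite for the relevant $x$, so that the approximate-minimizer inequality is meaningful; this is automatic, being dominated by $L\,d(p,\bar p)$. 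Note finally that, the parameter being frozen at $\bar p$ throughout, the ordinary strict outer slope suffices and no partial-slope construction like $(\ref{def:sostslx})$ is needed.
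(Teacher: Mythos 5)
Your proposal is correct and follows essentially the same route as the paper's proof: the same pre-calmness estimate $\varphi_F(\bar p,x_p)\le \ell\, d(p,\bar p)$ via the triangular inequality for the excess and hypothesis (iii), the same application of the Ekeland variational principle with $\lambda=\ell\, d(p,\bar p)/\sigma$, and the same contradiction with the strict outer slope (taking $\epsilon=\eta/2$) to force $\varphi_F(\bar p,x_\lambda)=0$. The only differences are cosmetic choices of the auxiliary constants $\delta_*,\zeta$.
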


\begin{proof}
According to hypothesis (iii), taken an arbitrary $\ell>\Lip{F}{\bar p}{\bar x}$
there must exist $\delta>0$ such that
\begin{eqnarray}      \label{in:locLipF}
   \haus{F(p_1,x_1)}{F(p_2,x_2)} &\le &\ell\max\{d(p_1,p_2),\, d(x_1,x_2)\},\\
   & &\hskip1cm\forall (p_1,x_1),\, (p_2,x_2)\in
   \ball{\bar p}{\delta}\times \ball{\bar x}{\delta}.
   \nonumber
\end{eqnarray}
According to hypothesis (iv), taken any $\sigma>0$ such that
$$
  \sostsl{\varphi_F(\bar p,\cdot)}(\bar x)>\sigma>0,
$$
there exists $\eta>0$ such that, for every $\epsilon\in (0,\eta)$,
it holds
\begin{equation}    \label{in:contrstsldeg}
   \stsl{\varphi_F(\bar p,\cdot)}(x)>\sigma,\quad\forall x\in
   \ball{\bar x}{\epsilon},\ 0<\varphi_F(\bar p,x)<\epsilon.
\end{equation}
Let us take positive reals $\delta_*$ and $\zeta$ in such a way that
\begin{equation}    \label{in:zetadeltastarc}
   \delta_*<\min\left\{\delta,\, {\eta\over 2(\ell+1)},\, {\eta\sigma\over 4(\ell+1)}\right\}
   \quad\hbox{ and }\quad
   \zeta <\min\left\{\delta,\, {\eta\over 4}\right\}.
\end{equation}
Now, fix an arbitrary $p\in\ball{\bar p}{\delta_*}\backslash
\{\bar p\}$. If $\Solv(p)\cap\ball{\bar x}{\zeta}=\varnothing$,
then nothing is left to prove. Otherwise, take an arbitrary
$x_p\in\Solv(p)\cap\ball{\bar x}{\zeta}$.
Consider the function $\varphi(\bar p,\cdot):X\longrightarrow [0,+\infty]$.
By virtue of hypothesis (ii), Lemma \ref{lem:Phivarphi}(i)
ensures that this function is l.s.c. on $X$. It is clearly
bounded from below.
Since $d(x_p,\bar x)\le\zeta<\delta$ and $d(p,\bar p)\le\delta_*<\delta$,
from inequality $(\ref{in:locLipF})$ and the triangular inequality
for the excess, it follows
\begin{eqnarray*}
  \varphi_F(\bar p,x_p)=\exc{F(\bar p,x_p)}{C}\le
  \exc{F(\bar p,x_p)}{F(p,x_p)}+\exc{F(p,x_p)}{C}\le
  \ell d(p,\bar p).
\end{eqnarray*}
Thus, it is
$$
  \varphi_F(\bar p,x_p)\le\inf_{x\in X}\varphi_F(\bar p,x)+\ell d(p,\bar p)
  =\ell d(p,\bar p).
$$
By applying the Ekeland variational principle, one can assert that,
corresponding to the value
$$
   \lambda={\ell d(p,\bar p)\over\sigma},
$$
there exists $x_\lambda\in X$, satisfying the below properties:
\begin{equation}      \label{in:EVP1calm}
   \varphi_F(\bar p,x_\lambda)\le \varphi_F(\bar p,x_p),
\end{equation}
\begin{equation}      \label{in:EVP2calm}
  d(x_\lambda,x_p)\le\lambda,
\end{equation}
and
\begin{equation}     \label{in:EVP3calm}
  \varphi_F(\bar p,x_\lambda)<\varphi_F(\bar p,x)+\sigma d(x,x_\lambda),
  \quad\forall x\in X\backslash\{x_\lambda\}.
\end{equation}
The inequality $(\ref{in:EVP3calm})$ implies
$$
  {\varphi_F(\bar p,x_\lambda)-\varphi_F(\bar p,x)\over
  d(x,x_\lambda)}<\sigma,\quad\forall x\in X\backslash\{x_\lambda\},
$$
wherefrom it follows
$$
  \stsl{\varphi_F(\bar p,\cdot)}(x_\lambda)\le\sigma.
$$
The last inequality entails that it must be $\varphi_F(\bar p,x_\lambda)
=0$. Indeed, if supposing $\varphi_F(\bar p,x_\lambda)>0$, by recalling
the choice of $\delta_*$ and $\zeta$ in $(\ref{in:zetadeltastarc})$,
along with inequality $(\ref{in:EVP1calm})$, one obtains
$$
  \varphi_F(\bar p,x_\lambda)\le\ell d(p,\bar p)<{\eta\over 2}
$$
and
$$
  d(x_\lambda,\bar x)\le d(x_\lambda,x_p)+d(x_p,\bar x)\le\lambda+\zeta
  \le {\ell d(p,\bar p)\over\sigma}+\zeta< {\eta\over 4}+{\eta\over 4}
  ={\eta\over 2}.
$$
Thus, inequality $(\ref{in:contrstsldeg})$ turns out to be contradicted
with $\epsilon=\eta/2$. The fact that it is $\varphi_F(\bar p,x_\lambda)=0$
implies $x_\lambda\in\Solv(\bar p)$. By taking into account inequality
$(\ref{in:EVP2calm})$, one finds
$$
  \dist{x_p}{\Solv(\bar p)}\le d(x_p,x_\lambda)\le
  {\ell d(p,\bar p)\over\sigma},
$$
which amounts to say that $x_p\in\ball{\Solv(\bar p)}{\ell d(p,\bar p)/\sigma}$.
By arbitrariness of $x_p\in\Solv(p)\cap\ball{\bar x}{\zeta}$ and
$\bar p\in\ball{\bar p}{\delta_*}\backslash\{\bar p\}$, the above argument
shows that $\Solv$ is calm at $(\bar p,\bar x)$, with $\calm{\Solv}{\bar p}{\bar x}\le
\ell/\sigma$. The arbitrariness of $\ell>\Lip{F}{\bar p}{\bar x}$ and
of $\sigma<\sostsl{\varphi_F(\bar p,\cdot)}(\bar x)$ leads to achieve
the estimate in the thesis, thereby completing the proof.
\end{proof}

In the same vein of Example \ref{ex:difcondnotsuff}, the next counterexample
shows that condition (iv) in Theorem \ref{thm:calmSolv} is far from
being necessary.

\begin{example}
Let us consider the same inclusion problem introduced in Example \ref{ex:difcondnotsuff}.
As a continuously differentiable function, $(p,x)\mapsto p^3+x^3$ is strictly
differentiable at $(\bar p,\bar x)=(0,0)$, and hence locally Lipschitz near
that point. Consequently, $F$ is locally Lipschitz near $(0,0)$. Since it is
\begin{eqnarray*}
  \varphi_F(0,x) = \left\{ \begin{array}{ll}
                       0, & \qquad\hbox{ if } x\ge 0,  \\
                       -x^3, &   \qquad\hbox{ if } x<0,
                 \end{array}
   \right.
\end{eqnarray*}
it results in
\begin{eqnarray*}
    \sostsl{\varphi_F(0,\cdot)}(0)& =& \lim_{\epsilon\to 0^+}\inf
    \{\stsl{\varphi_F(0,\cdot)}(x): x\in [-\epsilon,\epsilon],\
    0<\varphi_F(0,x)<\epsilon\} \\
    & =& \lim_{\epsilon\to 0^+}\inf\{3x^2: x\in [-\epsilon,0)\}=0.
\end{eqnarray*}
Thus, hypothesis (iv) of Theorem \ref{thm:calmSolv} fails to be
fulfilled. In spite of such a failure, the solution mapping $\Solv:
\R\rightrightarrows\R$ associated with the inclusion problem is not only
calm at $(0,0)$, but even Lipschitz continuous on $\R$.
\end{example}

As a further comment to the so far exposed results, it is to be noted
that the differential conditions appearing in Theorem \ref{thm:LiplscSolv}
and in Theorem \ref{thm:calmSolv} are different. Indeed, whereas both
of them are built by means of the partial strong slope with respect to
the variable $x$, the condition (iv) in Theorem \ref{thm:LiplscSolv}
considers the $\liminf_{\epsilon\to 0^+}$ regularization, with both
$p$ and $x$ varying near $\bar p$ and $\bar x$, respectively. In
contrast to this, condition (iv) in Theorem \ref{thm:calmSolv} requires only
$x$ to vary near $\bar x$, while it is kept $p=\bar p$.


In view of the formulation of the last result of this section, given
$F:P\times X\rightrightarrows Y$ and $\bar p\in P$, let us define
the values
$$
  \tau_{\bar p}=\inf\{\stsl{\varphi_F(\bar p,\cdot)}(x):\ x\in X
  \backslash\Solv(\bar p)\}
$$
and
\begin{eqnarray*}
  \Lipparz{F}{\bar p} &=& \inf\{\ell>0:\ \exists\delta>0:\ \sup_{x\in X}
  \haus{F(p_1,x)}{F(p_1,x)}\le\ell d(p_1,p_2), \\
  & &\forall p_1,\, p_2
  \in\ball{\bar p}{\delta}\}.
\end{eqnarray*}

\begin{theorem}[Lipschitz upper semicontinuity of $\Solv$]    \label{thm:LipuscSolv}
With reference to a parameterized set-valued inclusion $(\mathcal{SVI}_p)$,
let $\bar p\in P$, with $\Solv(\bar p)\ne\varnothing$. Suppose that:

(i) $(X,d)$ is metrically complete;

(ii) the set-valued mapping $F(\bar p,\cdot):X\rightrightarrows Y$
is l.s.c. on $X$;

(iii) the mapping $F$ is locally Lipschitz near $\bar p$ with respect to $p$,
uniformly in $x\in X$;

(iv) it is $\tau_{\bar p}>0$.

\noindent Then, the solution mapping $\Solv:P\rightrightarrows X$
is Lipschitz u.s.c. at $\bar p$ and the following estimate holds
$$
  \Lipusc{\Solv}{\bar p}\le{\Lipparz{F}{\bar p}
  \over\tau_{\bar p}}.
$$
\end{theorem}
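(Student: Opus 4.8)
The plan is to adapt the argument of Theorem~\ref{thm:calmSolv}, the crucial new feature being that hypothesis (iv) is a \emph{global} slope condition, valid on the whole of $X\setminus\Solv(\bar p)$; this will yield a \emph{global} error bound for $\varphi_F(\bar p,\cdot)$, so that no localization in the variable $x$ (and hence no confining radius $\eta$) is needed, and the upper semicontinuity inclusion can be obtained for all of $\Solv(p)$ at once, not merely for its trace on a small ball.

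First I would fix $\ell>\Lipparz{F}{\bar p}$ and $\sigma$ with $0<\sigma<\tau_{\bar p}$. By hypothesis (iii) there is $\delta>0$ with $\haus{F(p_1,x)}{F(p_2,x)}\le\ell\,d(p_1,p_2)$ for all $p_1,p_2\in\ball{\bar p}{\delta}$ and \emph{all} $x\in X$. Fix $p\in\ball{\bar p}{\delta}$; if $\Solv(p)=\varnothing$ there is nothing to prove, so pick any $x_p\in\Solv(p)$. Using the triangle inequality for the excess together with $F(p,x_p)\subseteq C$,
\[
  \varphi_F(\bar p,x_p)=\exc{F(\bar p,x_p)}{C}\le\exc{F(\bar p,x_p)}{F(p,x_p)}+\exc{F(p,x_p)}{C}\le\ell\,d(p,\bar p)<+\infty .
\]

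Next I would run the Ekeland-based error bound exactly as in Theorem~\ref{thm:calmSolv}. By hypothesis (ii) and Lemma~\ref{lem:Phivarphi}(i) the function $\varphi_F(\bar p,\cdot)$ is l.s.c.\ on the complete space $X$; it is nonnegative and, since $\Solv(\bar p)\ne\varnothing$, it vanishes somewhere, so $\inf_X\varphi_F(\bar p,\cdot)=0$. If $\varphi_F(\bar p,x_p)=0$ then $x_p\in\Solv(\bar p)$ and $\dist{x_p}{\Solv(\bar p)}=0$. Otherwise $\varphi_F(\bar p,x_p)>0$ and I would apply the Ekeland variational principle with $x_p$ as a $\varphi_F(\bar p,x_p)$-approximate minimizer and $\lambda=\varphi_F(\bar p,x_p)/\sigma$, obtaining $x_\lambda\in X$ with $d(x_\lambda,x_p)\le\varphi_F(\bar p,x_p)/\sigma$ and $\varphi_F(\bar p,x_\lambda)<\varphi_F(\bar p,x)+\sigma\,d(x,x_\lambda)$ for every $x\ne x_\lambda$; the latter forces $\stsl{\varphi_F(\bar p,\cdot)}(x_\lambda)\le\sigma<\tau_{\bar p}$. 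Should $\varphi_F(\bar p,x_\lambda)>0$ hold, then $x_\lambda\in X\setminus\Solv(\bar p)$ and hypothesis (iv) would give $\stsl{\varphi_F(\bar p,\cdot)}(x_\lambda)\ge\tau_{\bar p}$, a contradiction. Hence $\varphi_F(\bar p,x_\lambda)=0$, i.e.\ $x_\lambda\in\Solv(\bar p)$, and therefore
\[
  \dist{x_p}{\Solv(\bar p)}\le d(x_p,x_\lambda)\le\frac{\varphi_F(\bar p,x_p)}{\sigma}\le\frac{\ell\,d(p,\bar p)}{\sigma}.
\]

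Finally, by arbitrariness of $x_p\in\Solv(p)$ one gets $\Solv(p)\subseteq\ball{\Solv(\bar p)}{(\ell/\sigma)\,d(p,\bar p)}$ for every $p\in\ball{\bar p}{\delta}$, which is precisely Lipschitz upper semicontinuity of $\Solv$ at $\bar p$ with $\Lipusc{\Solv}{\bar p}\le\ell/\sigma$; letting $\ell\downarrow\Lipparz{F}{\bar p}$ and $\sigma\uparrow\tau_{\bar p}$ gives the stated estimate. I do not expect a genuine obstacle here: the computation is a verbatim specialization of the error-bound-via-strong-slope technique underlying Theorem~\ref{thm:calmSolv} (and, more broadly, of the Hoffman/Ioffe/Az\'e--Corvellec paradigm recalled in the Introduction). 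The only points deserving a word of care are the applicability of Ekeland's principle (properness and lower boundedness of $\varphi_F(\bar p,\cdot)$, both clear from hypothesis (ii) and nonnegativity) and the conceptual remark that replacing the $\liminf$-type slopes of Theorems~\ref{thm:LiplscSolv} and~\ref{thm:calmSolv} by the global infimum $\tau_{\bar p}$ is exactly what upgrades calmness to Lipschitz upper semicontinuity.
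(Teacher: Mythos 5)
Your proposal is correct and follows essentially the same route as the paper, which itself only sketches this proof by invoking the Ekeland-based argument of Theorem~\ref{thm:calmSolv} ``with obvious adaptations''; your write-up supplies exactly those adaptations, including the key observation that the global infimum $\tau_{\bar p}$ removes the need for the localizing radii $\zeta$ and $\eta$. The only cosmetic difference is your choice of $\lambda=\varphi_F(\bar p,x_p)/\sigma$ in Ekeland's principle rather than the paper's $\ell\,d(p,\bar p)/\tau$, which leads to the same final estimate.
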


\begin{proof}
By virtue of hypothesis (iv), it is possible to pick any $\tau\in (0,\tau_{\bar p})$.
According to hypothesis (iii), taken any positive $\ell>\Lipparz{F}{\bar p}$,
there exists $\delta>0$ such that
\begin{eqnarray}    \label{LipLipparzunif}
  \haus{F(p_1,x)}{F(p_2,x)}\le\ell d(p_1,p_2),\quad\forall
  p_1,\, p_2\in\ball{\bar p}{\delta},\ \forall x\in X.
\end{eqnarray}
Fix an arbitrary $p\in\ball{\bar p}{\delta}\backslash\{\bar p\}$.
If $\Solv(p)=\varnothing$, the inclusion
$$
  \Solv(p)\subseteq\ball{\Solv(\bar p)}{{\ell\over\tau}d(p,\bar p)}
$$
is trivially satisfied. Otherwise, take an arbitrary $x_p\in\Solv(p)$.
Consider the function $\varphi_F(\bar p,\cdot):X\longrightarrow [0,+\infty]$.
By virtue of hypothesis (ii), Lemma \ref{lem:Phivarphi}(i)
ensures that this function is l.s.c. on $X$. It is clearly
bounded from below. Besides, since it is $F(p,x_p)\subseteq C$,
on account of inequality $(\ref{LipLipparzunif})$ one obtains
\begin{eqnarray*}
  \varphi_F(\bar p,x_p)=\exc{F(\bar p,x_p)}{C}\le
  \exc{F(\bar p,x_p)}{F(p,x_p)}+\exc{F(p,x_p)}{C}\le
  \ell d(p,\bar p),
\end{eqnarray*}
so that $\varphi_F(\bar p,x_p)\le\inf_{x\in X}\varphi_F(\bar p,x)+\ell d(p,\bar p)$.
By applying the Ekeland variational principle, with
$$
  \lambda={\ell d(p,\bar p)\over\tau}
$$
and proceeding along the same lines as in the proof of Theorem \ref{thm:calmSolv}
with obvious adaptations, one can reach immediately all assertions in the thesis.
\end{proof}

As a comment to Theorem \ref{thm:LipuscSolv}, let us note that, since it is
$$
   \sostsl{\varphi_F(\bar p,\cdot)}(\bar x)\ge\tau_{\bar p},
$$
then condition $\tau_{\bar p}>0$ is stricter than condition (iv) in
Theorem \ref{thm:calmSolv}. Consistently, the thesis of Theorem \ref{thm:LipuscSolv}
guarantees a stronger property for $\Solv$ than that of Theorem \ref{thm:calmSolv},
in consideration of Fact 2.

\begin{remark}
As mentioned,
the analysis approach pursued for achieving the results in this section reveals
connections with the study of local error bound properties. Let us recall that,
for an extended real-valued function $\psi:X\longrightarrow\R\cup\{+\infty\}$
defined on a metric space, the error bound property is defined by the
inequality
\begin{equation}     \label{in:locerbopsi}
   \dist{x}{[\psi\le 0]}\le\gamma[\psi(x)]_+,
\end{equation}
where $[r]_+=\max\{r,0\}$, with $r\in\R\cup\{+\infty\}$. More precisely,
$\psi$ is said to admit a local error bound at $\bar x\in\psi^{-1}(0)$
if there exist $\gamma\ge 0$ and $\delta>0$ such that inequality
$(\ref{in:locerbopsi})$ holds for every $x\in\ball{\bar x}{\delta}$.
Since the values taken by $\Solv$ are reformulated as sublevel sets
of $\varphi_F$ in $(\ref{eq:levsetupinvchar})$, the excess function
$\varphi_F$ is the key element to catch the aforementioned connection,
playing the role of $\psi$ in $(\ref{in:locerbopsi})$.
While this connection leaves open the possibility of mutual benefits
for both the topics in future investigations, to the best of the author's
knowledge none of the existing error bound conditions can be
applied in the current context, because of the peculiar form taken by
$\varphi_F$. Indeed, if it is true that a well developed theory of
error bounds for function of the form
$$
  \psi(x)=\sup_{t\in T}\psi_t(x)
$$
already exists (see, for instance, \cite{KrNgTh10}),  it
requires the index set $T$ to be a fixed, compact Hausdorff space,
whereas in the definition of $\varphi_F$ the supremum must be taken
over a set not necessarily compact and depending on $x$.
\end{remark}


\section{Consequences on the value analysis in parametric optimization} \label{Sect:4}

Let $\vartheta:P\times X\longrightarrow\R$ be a given function defined
on the Cartesian product of two metric spaces $P$ and $X$, and let
$(\mathcal{SVI}_p)$ be a given parameterized class of set-valued inclusions.
In the current section, some consequences of the findings exposed in
Section \ref{Sect:3} will be presented, with reference to the
analysis of the parametric class of constrained optimization problems defined
by the aforegiven data, namely
$$
  \min\vartheta(p,x)
  \ \hbox{ with $x\in X$ subject to }\ F(p,x)\subseteq C.
  \leqno (\mathcal{P}_p)
$$
The feasible region of each problem $(\mathcal{P}_p)$
is given by the set-valued mapping $\Solv:P\rightrightarrows X$,
defined as in $(\ref{eq:Solvdef})$.
More precisely, the investigations will focus on the calmness properties
of the optimal value (alias, performance) function
$\val:P\longrightarrow\R\cup\{\pm\infty\}$, which can be associated
with $(\mathcal{P}_p)$, i.e.
$$
  \val(p)=\inf_{x\in\Solv(p)}\vartheta(p,x).
$$
A further element appearing in what follows is the solution mapping
${\rm Argmin}:P\rightrightarrows X$ associated with $(\mathcal{P}_p)$,
i.e.
$$
  \Argmin{p}=\{x\in\Solv(p):\ \vartheta(p,x)\le\val(p)\}.
$$
As one expects, in consideration of the broad spectrum of applications
promised by a similar topic, a wide literature flourished on that
subject, yielding a large amount of results, often tailored on the base
of the problem format.
One of the key reference in the value analysis, for optimization
problems with an abstract feasible region formalized by a set-valued
mapping depending on a parameter, is the so-called maximum Berge's theorem (see \cite[Theorem 17.31]{AliBor06}),
which provides a sufficient condition for the continuity of the
optimal value function in a pure topological setting.
Advances in this direction were obtained with \cite[Theorem 3B.5]{DonRoc14}.
Here a result about calmness of $\val$ is proposed, which is
specific for the problem format $(\mathcal{P}_p)$.

As in Section \ref{Sect:3}, throughout the current section it is
assumed that $\dom F=P\times X$. Besides, as a Cartesian product metric space,
$P\times X$ will be supposed to be equipped with the $\max$ distance.
By exploiting the same arguments as in the proof of
\cite[Proposition 3.2]{Uder12} (which makes only assumptions
on $\vartheta$ and $\Solv$, independently of how
the constraints defining $\Solv$ are formalized), one can
establish the following result, where Lipschitz lower semicontinuity
plays a crucial role.

\begin{proposition}[Calmness from above of $\val$]     \label{pro:ucalmval}
With reference to a parametric class of problem $(\mathcal{P}_p)$,
let $\bar p\in P$ and let $\bar x\in\Argmin{\bar p}$. Suppose that:

(i) $(X,d)$ is metrically complete;

(ii) the mapping $F(\cdot,\bar x):P\rightrightarrows Y$ is
Lipschitz u.s.c. at $\bar p$;

(iii) there exists $\delta>0$ such that, for every $p\in\ball{\bar p}{\delta}$,
each set-valued mapping $F(p,\cdot):X\rightrightarrows Y$ is l.s.c. on $X$;

(iv) it holds $\sostslx{\varphi_F}(\bar p,\bar x)>0$;

(v) function $\vartheta:P\times X\longrightarrow\R$ is calm from
above at $(\bar p,\bar x)$.

\noindent Then, function $\val:P\longrightarrow\R\cup\{\pm\infty\}$
is calm from above at $\bar p$ and
\begin{equation}   \label{in:uclmvalest}
  \ucalm{\val}{\bar p}\le\ucalm{\vartheta}{\bar p,\bar x}\cdot
  \max\left\{1,\, {\Lipusc{F(\cdot,\bar x)}{\bar p}
  \over  \sostslx{\varphi_F}(\bar p,\bar x)}\right\}.
\end{equation}
\end{proposition}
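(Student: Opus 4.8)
The plan is to reduce the statement to a direct estimate using the Lipschitz lower semicontinuity of $\Solv$ established in Theorem \ref{thm:LiplscSolv}, combined with the calmness from above of $\vartheta$. First I would invoke Theorem \ref{thm:LiplscSolv}: hypotheses (i)--(iv) of the present proposition are precisely those of that theorem, so $\Solv$ is Lipschitz l.s.c. at $(\bar p,\bar x)$ with $\Liplsc{\Solv}{\bar p}{\bar x}\le\Lipusc{F(\cdot,\bar x)}{\bar p}/\sostslx{\varphi_F}(\bar p,\bar x)$. Denote this bound by $\ell_\Solv$ and, for an arbitrary $\ell_\Solv'>\ell_\Solv$, fix $\delta_1>0$ with $\Solv(p)\cap\ball{\bar x}{\ell_\Solv' d(p,\bar p)}\ne\varnothing$ for all $p\in\ball{\bar p}{\delta_1}$. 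Similarly, from hypothesis (v), for an arbitrary $\gamma>\ucalm{\vartheta}{\bar p,\bar x}$ there is $\delta_2>0$ with $\vartheta(p,x)-\vartheta(\bar p,\bar x)\le\gamma\max\{d(p,\bar p),d(x,\bar x)\}$ for all $(p,x)\in\ball{(\bar p,\bar x)}{\delta_2}$ (using the $\max$ distance on $P\times X$).

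The core estimate then proceeds as follows. Fix $p\in\ball{\bar p}{\delta}$ with $\delta\le\min\{\delta_1,\delta_2\}$ small enough that $\ell_\Solv' d(p,\bar p)\le\delta_2$ as well. Pick $x_p\in\Solv(p)\cap\ball{\bar x}{\ell_\Solv' d(p,\bar p)}$. Since $x_p\in\Solv(p)$, it is feasible for $(\mathcal{P}_p)$, so $\val(p)\le\vartheta(p,x_p)$. On the other hand, because $\bar x\in\Argmin{\bar p}$, one has $\vartheta(\bar p,\bar x)=\val(\bar p)$. Combining with the calmness-from-above inequality for $\vartheta$ at $(p,x_p)$, and noting that $d(x_p,\bar x)\le\ell_\Solv' d(p,\bar p)$ gives $\max\{d(p,\bar p),d(x_p,\bar x)\}\le\max\{1,\ell_\Solv'\}d(p,\bar p)$, one obtains
\[
  \val(p)-\val(\bar p)\le\vartheta(p,x_p)-\vartheta(\bar p,\bar x)
  \le\gamma\max\{1,\ell_\Solv'\}d(p,\bar p).
\]
This shows $\val$ is calm from above at $\bar p$ with $\ucalm{\val}{\bar p}\le\gamma\max\{1,\ell_\Solv'\}$; letting $\gamma\downarrow\ucalm{\vartheta}{\bar p,\bar x}$ and $\ell_\Solv'\downarrow\ell_\Solv$ yields the estimate in $(\ref{in:uclmvalest})$.

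I do not anticipate a serious obstacle here, since the result is essentially a transcription of \cite[Proposition 3.2]{Uder12} to this setting, as the statement itself signals. The one point requiring a little care is handling the case $\val(p)=-\infty$ or $\val(\bar p)\in\{\pm\infty\}$: the argument above shows $\val(p)\le\vartheta(p,x_p)<+\infty$ whenever $\Solv(p)\ne\varnothing$ (guaranteed near $\bar p$ by Lipschitz lower semicontinuity), and $\val(\bar p)=\vartheta(\bar p,\bar x)$ is finite because $\bar x\in\Argmin{\bar p}$ and $\vartheta$ is real-valued, so the difference $\val(p)-\val(\bar p)$ is always well-defined in $\R\cup\{-\infty\}$ and the bounding inequality makes sense. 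A second minor point is the choice of the $\max$ distance on $P\times X$, which is what produces the $\max\{1,\cdot\}$ factor rather than a sum; this is exactly the convention fixed in Section \ref{Sect:4}, so it is consistent with the statement.
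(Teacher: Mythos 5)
Your argument is correct and follows essentially the same route as the paper's own proof: invoke Theorem \ref{thm:LiplscSolv} to produce a nearby feasible point $x_p\in\Solv(p)\cap\ball{\bar x}{\ell d(p,\bar p)}$, bound $\val(p)-\val(\bar p)\le\vartheta(p,x_p)-\vartheta(\bar p,\bar x)$ using $\bar x\in\Argmin{\bar p}$, apply the calmness-from-above of $\vartheta$ with the $\max$ distance, and pass to the infimum over the admissible constants. The technical points you flag (shrinking the neighbourhood so that $x_p$ stays in the ball where the calmness inequality for $\vartheta$ holds, and the role of the $\max$ distance in producing the factor $\max\{1,\ell\}$) are exactly the ones the paper handles.
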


\begin{proof}
By hypothesis (v), fixed any $\kappa>\ucalm{\vartheta}{\bar p,\bar x}$,
there exists $\delta>0$ such that
\begin{equation}     \label{in:varthetacalmabove}
   \vartheta(p,x)-\vartheta(\bar p,\bar x)\le\kappa\max\{d(p,\bar p),
   d(x,\bar x)\},\quad\forall (p,x)\in\ball{\bar p}{\delta}\times
   \ball{\bar x}{\delta}.
\end{equation}
Under hypotheses (i)-(iv), Theorem \ref{thm:LiplscSolv} ensures that
$\Solv$ is Lipschitz l.s.c. at $(\bar p,\bar x)$, with modulus
satisfying inequality $(\ref{in:LiplscSolv})$. Therefore, fixed any
$\ell>\Lipusc{F(\cdot,\bar x)}{\bar p}/\sostslx{\varphi_F}(\bar p,\bar x)$,
there exists $\zeta>0$ such that
$$
  \Solv(p)\cap\ball{\bar x}{\ell d(p,\bar p)}\ne\varnothing,\quad\forall
  p\in\ball{\bar p}{\zeta}.
$$
This means that, for any $p\in\ball{\bar p}{\zeta}$, there must exist
$x_p\in\Solv(p)$ such that $d(x_p,\bar x)\le\ell d(p,\bar p)$.
Without loss of generality, one can assume $\zeta<\min\{\delta,\,
\delta/\ell\}$. Consequently, one has $x_p\in\ball{\bar x}{\delta}$,
and hence $(p,x_p)\in\ball{\bar p}{\delta}\times\ball{\bar x}{\delta}$.
This fact allows one to invoke inequality $(\ref{in:varthetacalmabove})$.
Therefore, it follows
\begin{eqnarray*}
  {\val(p)-\val(\bar p)\over d(p,\bar p)} &\le&
  {\vartheta(p,x_p)-\vartheta(\bar p,\bar x)\over d(p,\bar p)}
  \le \kappa\cdot {\max\{d(p,\bar p),d(x_p,\bar x)\}\over d(p,\bar p)}  \\
  &\le &\kappa\cdot\max\{1,\ell\},\quad\forall p\in
  \ball{\bar p}{\zeta}\backslash\{\bar p\}.
\end{eqnarray*}
The above inequality chain allows one to obtain
$$
  \limsup_{p\to\bar p} {\val(p)-\val(\bar p)\over d(p,\bar p)}
  \le\kappa\cdot\max\{1,\ell\}<+\infty.
$$
This shows that the function $\val$ is calm from above at $\bar p$, with
$\ucalm{\val}{\bar p}\le\kappa\cdot\max\{1,\ell\}$. To conclude the proof,
the inequality $(\ref{in:uclmvalest})$ can be achieved by arbitrariness of
$\kappa>\ucalm{\vartheta}{\bar p,\bar x}$ and of
$\ell>\Lipusc{F(\cdot,\bar x)}{\bar p}/\sostslx{\varphi_F}(\bar p,\bar x)$.
\end{proof}

The counterpart of the above result for the calmness from below of
$\val$ is established next by exploiting the Lipschitz upper semicontinuity
property of $\Solv$.

\begin{proposition}[Calmness from below of $\val$] \label{pro:lcalmval}
With reference to a parametric class of problem $(\mathcal{P}_p)$,
let $\bar p\in P$ and let $\bar x\in\Argmin{\bar p}$. Suppose that:

(i) $(X,d)$ is metrically complete;

(ii) the set-valued mapping $F(\bar p,\cdot):X\rightrightarrows Y$
is l.s.c. on $X$;

(iii) the mapping $F$ is locally Lipschitz near $\bar p$ with respect to $p$,
uniformly in $x\in X$;

(iv) it is $\tau_{\bar p}>0$;

(v) function $\vartheta:P\times X\longrightarrow\R$ is Lipschitz
continuous on $P\times X$.

\noindent Then, function $\val:P\longrightarrow\R\cup\{\pm\infty\}$
is calm from below at $\bar p$ and
\begin{equation}    \label{in:lcalmvalest}
  \lcalm{\val}{\bar p}\le\Lip{\vartheta}{P}{X}\cdot\max\left\{1,\,
  {\Lipparz{F}{\bar p}\over\tau_{\bar p}}\right\}.
\end{equation}
\end{proposition}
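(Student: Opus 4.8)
The plan is to mirror the structure of the proof of Proposition~\ref{pro:ucalmval}, but now invoking Theorem~\ref{thm:LipuscSolv} in place of Theorem~\ref{thm:LiplscSolv}, and using the global Lipschitz continuity of $\vartheta$ instead of mere calmness from above. First I would fix an arbitrary $\ell>\Lipparz{F}{\bar p}/\tau_{\bar p}$; since hypotheses (i)--(iv) are exactly those of Theorem~\ref{thm:LipuscSolv}, that result guarantees that $\Solv$ is Lipschitz u.s.c. at $\bar p$ with $\Lipusc{\Solv}{\bar p}\le\Lipparz{F}{\bar p}/\tau_{\bar p}<\ell$, so there exists $\delta>0$ with
$$
  \Solv(p)\subseteq\ball{\Solv(\bar p)}{\ell d(p,\bar p)},\quad\forall p\in\ball{\bar p}{\delta}.
$$

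Next I would bound $\val(\bar p)-\val(p)$ from above. Fix $p\in\ball{\bar p}{\delta}\backslash\{\bar p\}$. Take $\bar x\in\Argmin{\bar p}$, so $\vartheta(\bar p,\bar x)=\val(\bar p)$. If $\Solv(p)=\varnothing$ then $\val(p)=+\infty$ and there is nothing to prove for that $p$; otherwise pick any $x\in\Solv(p)$. By the inclusion above there exists $x'\in\Solv(\bar p)$ with $d(x,x')\le\ell d(p,\bar p)$ (using closedness of $\Solv(\bar p)$, which holds under hypothesis (ii) by Lemma~\ref{lem:Phivarphi}(i); if one prefers to avoid invoking closedness one just takes $d(x,x')$ slightly larger, which is harmless by the final arbitrariness argument). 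Then, writing $L=\Lip{\vartheta}{P}{X}$ and using the $\max$ distance on $P\times X$,
$$
  \val(\bar p)\le\vartheta(\bar p,x')\le\vartheta(p,x)+L\max\{d(p,\bar p),d(x,x')\}\le\vartheta(p,x)+L\max\{1,\ell\}\,d(p,\bar p).
$$
Taking the infimum over $x\in\Solv(p)$ gives $\val(\bar p)-\val(p)\le L\max\{1,\ell\}\,d(p,\bar p)$, hence
$$
  \limsup_{p\to\bar p}\frac{\val(\bar p)-\val(p)}{d(p,\bar p)}\le L\max\{1,\ell\}<+\infty,
$$
which is precisely calmness from below of $\val$ at $\bar p$ with $\lcalm{\val}{\bar p}\le L\max\{1,\ell\}$. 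Letting $\ell\downarrow\Lipparz{F}{\bar p}/\tau_{\bar p}$ yields the estimate $(\ref{in:lcalmvalest})$.

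I do not expect a genuine obstacle here: the argument is essentially a transcription of the proof of Proposition~\ref{pro:ucalmval} with the roles of ``lower'' and ``upper'' swapped, the main bookkeeping point being the direction of the inequality (one now compares $\val(\bar p)$, realized at $\bar x\in\Argmin{\bar p}$, against an approximating point $x'\in\Solv(\bar p)$ near an arbitrary feasible $x\in\Solv(p)$, rather than the other way around). The only mild care needed is the handling of the case $\Solv(p)=\varnothing$ (where $\val(p)=+\infty$ and the calmness-from-below inequality is trivially satisfied) and the legitimacy of selecting $x'\in\Solv(\bar p)$ achieving the distance bound, for which the closedness of $\Solv(\bar p)$ furnished by hypothesis (ii) and Lemma~\ref{lem:Phivarphi}(i) suffices. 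Everything else is routine, so I would simply note that the proof proceeds ``along the same lines as the proof of Proposition~\ref{pro:ucalmval}, with the obvious modifications,'' and record the displayed estimates above.
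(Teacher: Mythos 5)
Your argument coincides with the paper's proof: it invokes Theorem \ref{thm:LipuscSolv} to obtain Lipschitz upper semicontinuity of $\Solv$ at $\bar p$, selects for each feasible $x\in\Solv(p)$ a nearby point of $\Solv(\bar p)$, and exploits the global Lipschitz continuity of $\vartheta$ together with the fact that $\vartheta$ evaluated at any point of $\Solv(\bar p)$ dominates $\val(\bar p)$. The only caveat is that closedness of $\Solv(\bar p)$ does not by itself guarantee attainment of the distance in a general metric space, so the paper works with the $(\ell+\epsilon)$ slack that you correctly offer as your fallback; with that choice the two proofs are identical.
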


\begin{proof}
By hypothesis (v), fixed any $\kappa>\Lip{\vartheta}{P}{X}$,
one has
\begin{equation}  \label{in:Lipvarthetacalmbelow}
   |\vartheta(p_1,x_1)-\vartheta(p_2,x_2)|\le\kappa\cdot\max
   \{d(p_1,p_2),d(x_1,x_2)\},\quad\forall (p_1,x_1),\, (p_2,x_2)
   \in P\times X.
\end{equation}
Since under hypotheses (i)-(iv) one can invoke Theorem \ref{thm:LipuscSolv},
the set-valued mapping $\Solv$ turns out to be Lipschitz u.s.c. at $\bar p$,
with $\Lipusc{\Solv}{\bar p}\le\Lipparz{F}{\bar p}/\tau_{\bar p}$.
Accordingly, fixed an arbitrary $\ell>\Lipparz{F}{\bar p}/\tau_{\bar p}$,
there exists $\delta>0$ such that
$$
  \dist{x}{\Solv(\bar p)}\le\ell d(p,\bar p),\quad\forall p\in
  \ball{\bar p}{\delta}.
$$
The last inequality means that fixed $\epsilon>0$, for every $x\in\Solv(p)$
there exists $z_x\in\Solv(\bar p)$ such that
$$
  d(z_x,x)\le(\ell+\epsilon)d(p,\bar p),\quad\forall p\in\ball{\bar p}{\delta}.
$$
By using inequality $(\ref{in:Lipvarthetacalmbelow})$, one finds
\begin{eqnarray*}
  \vartheta(p,x) &\ge &\vartheta(\bar p,z_x)-\kappa\cdot\max\{d(p,\bar p),d(x,z_x)\}\\
  &\ge &\vartheta(\bar p,\bar x)-\kappa\cdot\max\{1, (\ell+\epsilon)\}d(p,\bar p),
  \quad\forall x\in\Solv(p),\quad\forall p\in\ball{\bar p}{\delta},
\end{eqnarray*}
whence it follows
\begin{eqnarray*}
  {\val(p)-\val(\bar p)\over d(p,\bar p)} &=&
  {\inf_{x\in\Solv(p)}\vartheta(p,x)-\vartheta(\bar p,\bar x)
  \over d(p,\bar p)} \\
  &\ge& -\kappa\cdot\max\{1, (\ell+\epsilon)\}>-\infty,\quad\forall
 p\in\ball{\bar p}{\delta}\backslash\{\bar p\}.
\end{eqnarray*}
By passing to the $\liminf$ as $p\to\bar p$, the last inequality
shows that function $\val$ is calm from below at $\bar p$ and
$\lcalm{\val}{\bar p}\le\kappa\cdot\max\{1, (\ell+\epsilon)\}$.
The arbitrariness of $\ell$, $\epsilon>0$ and $\kappa>\Lip{\vartheta}{P}{X}$
allows one to achieve the estimate in $(\ref{in:lcalmvalest})$, thereby
completing the proof.
\end{proof}

By combining the previous results of this section, the following condition
ensuring the calmness of $\val$ can be achieved.

\begin{theorem}[Calmness of $\val$]
With reference to a parametric class of problem $(\mathcal{P}_p)$,
let $\bar p\in P$ and let $\bar x\in\Argmin{\bar p}$. Suppose that:

(i) $(X,d)$ is metrically complete;

(ii) $F$ is locally Lipschitz near $\bar p$ with respect to $p$,
uniformly in $x\in X$;

(iii) there exists $\delta>0$ such that, for every $p\in\ball{\bar p}{\delta}$,
each $F(p,\cdot):X\rightrightarrows Y$ is l.s.c. on $X$;

(iv) it is $\min\{\sostslx{\varphi_F}(\bar p,\bar x),\tau_{\bar p}\}>0$;

(v) function $\vartheta:P\times X\longrightarrow\R$ is Lipschitz
continuous on $P\times X$.

\noindent Then, function $\val:P\longrightarrow\R\cup\{\pm\infty\}$
is calm at $\bar p$ and it holds
\begin{equation}    \label{in:calmvalest}
  \fulcalm{\val}{\bar p}\le\Lip{\vartheta}{P}{X}\cdot\max\left\{1,\,
  {\Lipparz{F}{\bar p}\over \min\{\sostslx{\varphi_F}
  (\bar p,\bar x),\tau_{\bar p}\}}\right\}.
\end{equation}
\end{theorem}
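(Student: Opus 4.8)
The plan is to deduce the statement directly from Proposition \ref{pro:ucalmval} and Proposition \ref{pro:lcalmval}, relying on the elementary observation (compare Fact 3 and the definitions $(\ref{def:calmsvmap})$--$(\ref{in:defucalm})$) that a single-real-valued function is calm at a point if and only if it is there both calm from above and calm from below, in which case
$$
  \fulcalm{\val}{\bar p}\le\max\left\{\ucalm{\val}{\bar p},\,\lcalm{\val}{\bar p}\right\}.
$$
First I would note that, since $\bar x\in\Argmin{\bar p}$ and $\vartheta$ is real-valued, one has $\val(\bar p)=\vartheta(\bar p,\bar x)\in\R$, so $\Solv(\bar p)\ne\varnothing$ and calmness is meaningful at $\bar p$.

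Next I would verify that hypotheses (i)--(v) of the theorem imply all the assumptions of Proposition \ref{pro:ucalmval}: completeness of $X$ is common; assumption (iii) there is assumption (iii) here; assumption (iv) there follows from assumption (iv) here, because $\sostslx{\varphi_F}(\bar p,\bar x)\ge\min\{\sostslx{\varphi_F}(\bar p,\bar x),\tau_{\bar p}\}>0$; assumption (v) there, the calmness from above of $\vartheta$ at $(\bar p,\bar x)$, follows from the Lipschitz continuity of $\vartheta$ on $P\times X$, with $\ucalm{\vartheta}{\bar p,\bar x}\le\Lip{\vartheta}{P}{X}$. The only point requiring a short argument is assumption (ii) there: specializing the hypothesis that $F$ is locally Lipschitz near $\bar p$ with respect to $p$, uniformly in $x$, to the slice $x=\bar x$, one sees that $F(\cdot,\bar x)$ is locally Lipschitz near $\bar p$, hence, by Fact 6, Lipschitz u.s.c. at $\bar p$ with $\Lipusc{F(\cdot,\bar x)}{\bar p}\le\Lipparz{F}{\bar p}$. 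Invoking Proposition \ref{pro:ucalmval} and, in estimate $(\ref{in:uclmvalest})$, bounding $\ucalm{\vartheta}{\bar p,\bar x}$ and $\Lipusc{F(\cdot,\bar x)}{\bar p}$ from above by $\Lip{\vartheta}{P}{X}$ and $\Lipparz{F}{\bar p}$, and bounding the denominator $\sostslx{\varphi_F}(\bar p,\bar x)$ from below by $\min\{\sostslx{\varphi_F}(\bar p,\bar x),\tau_{\bar p}\}$, I would obtain
$$
  \ucalm{\val}{\bar p}\le\Lip{\vartheta}{P}{X}\cdot\max\left\{1,\,
  \frac{\Lipparz{F}{\bar p}}{\min\{\sostslx{\varphi_F}(\bar p,\bar x),\tau_{\bar p}\}}\right\}.
$$

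Symmetrically, hypotheses (i)--(v) furnish verbatim the five assumptions of Proposition \ref{pro:lcalmval}: its assumption (ii) is assumption (iii) here read at $p=\bar p$, its assumption (iii) is assumption (ii) here, its assumption (iv) is assumption (iv) here, and assumptions (i), (v) coincide. Hence $\val$ is calm from below at $\bar p$, and estimate $(\ref{in:lcalmvalest})$ together with $\tau_{\bar p}\ge\min\{\sostslx{\varphi_F}(\bar p,\bar x),\tau_{\bar p}\}$ yields for $\lcalm{\val}{\bar p}$ the same upper bound as displayed above. Taking the maximum of the two one-sided moduli then gives simultaneously the calmness of $\val$ at $\bar p$ (finiteness of $\val$ near $\bar p$ being guaranteed from above by the Lipschitz l.s.c. of $\Solv$ used within Proposition \ref{pro:ucalmval}, and from below by calmness from below) and the estimate $(\ref{in:calmvalest})$. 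I do not expect any real obstacle here: the argument is pure bookkeeping---checking that the (somewhat stronger, uniform-in-$x$) hypotheses here imply those of the two preceding propositions---together with the monotonicity of the two modulus estimates under replacing both denominators by their common lower bound $\min\{\sostslx{\varphi_F}(\bar p,\bar x),\tau_{\bar p}\}$; the one point worth stating explicitly is the equivalence between calmness and the conjunction of its two one-sided versions.
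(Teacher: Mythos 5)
Your proposal is correct and follows essentially the same route as the paper: verify that hypotheses (i)--(v) subsume the assumptions of Propositions \ref{pro:ucalmval} and \ref{pro:lcalmval} (in particular deriving the Lipschitz upper semicontinuity of $F(\cdot,\bar x)$ at $\bar p$ from the uniform local Lipschitz hypothesis via Fact 6, and the lower semicontinuity of $F(\bar p,\cdot)$ from hypothesis (iii)), then combine the two one-sided calmness estimates, replacing both denominators by $\min\{\sostslx{\varphi_F}(\bar p,\bar x),\tau_{\bar p}\}$. The paper carries out the final combination explicitly with constants $\ell_1,\ell_2$ and radii $\delta_1,\delta_2$ rather than quoting the equivalence of calmness with its two one-sided versions, but this is only a presentational difference.
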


\begin{proof}
In order to prove that $\val$ is calm at $\bar p$ within the proposed
approach, one needs to check that, under the current hypotheses (i)-(v), it is
possible to apply both Proposition \ref{pro:lcalmval} and Proposition
\ref{pro:ucalmval}.
To this aim, let us start with observing that, if the set-valued mapping
$F$ is locally Lipschitz near $\bar p$ with respect to $p$,uniformly in $x\in X$,
then, in particular, the set-valued mapping $p\leadsto F(p,\bar x)$ is
Lipschitz u.s.c. at $\bar p$, with $\Lipusc{F(\cdot,\bar x)}{\bar p}
\le\Lipparz{F}{\bar p}$ (remember Fact 6). Secondly, observe that hypothesis (iii)
entails, in particular, that the set-valued mapping $x\leadsto F(\bar p,x)$
is l.s.c. on $X$. Hypothesis (iv) clearly implies that the condition
(iv) of both Proposition \ref{pro:lcalmval} and Proposition \ref{pro:ucalmval}
is fulfilled.
Finally, the Lipschitz continuity of $\vartheta$ on $P\times X$
evidently forces the calmness from above of $\vartheta$ at $(\bar p,\bar x)$,
with $\ucalm{\vartheta}{\bar p,\bar x}\le\Lip{\vartheta}{P}{X}$.
Thus, according to Proposition \ref{pro:ucalmval}, corresponding to
\begin{equation}    \label{in:lcalmvalell1}
   \ell_1>\Lip{\vartheta}{P}{X}\cdot\max\left\{1,\,
  {\Lipparz{F}{\bar p}\over \sostslx{\varphi_F}{(\bar p,\bar x)}}
  \right\},
\end{equation}
there exists $\delta_1>0$ such that
$$
  \val(p)-\val(\bar p)\le \ell_1d(p,\bar p),\quad\forall p
  \in\ball{\bar p}{\delta_1}.
$$
According to Proposition \ref{pro:lcalmval}, corresponding to
\begin{equation}    \label{in:ucalmvalell2}
  \ell_2>\Lip{\vartheta}{P}{X}\cdot\max\left\{1,\,
  {\Lipparz{F}{\bar p}\over \tau_{\bar p}}\right\}
\end{equation}
there exists $\delta_2>0$ such that
$$
  \val(p)-\val(\bar p)\ge -\ell_2d(p,\bar p),\quad\forall p
  \in\ball{\bar p}{\delta_2}.
$$
Therefore, by setting $\delta_0=\min\{\delta_1,\delta_2\}$
and $\ell_0=\max\{\ell_1,\ell_2\}$, one can assert that
$$
  |\val(p)-\val(\bar p)|\le\ell_0d(p,\bar p),\quad\forall p
  \in\ball{\bar p}{\delta_0}.
$$
This shows that $\val$ is calm at $\bar p$ and that $\fulcalm{\val}{\bar p}
\le\ell_0$. Since $\ell_1$ and $\ell_2$ can be chosen to be arbitrarily
closed to the right term in inequalities $(\ref{in:lcalmvalell1})$
and $(\ref{in:ucalmvalell2})$, respectively, one can conclude that inequality
$(\ref{in:calmvalest})$ holds.
\end{proof}

\vskip1cm


\section{Some special conditions in Banach spaces} \label{Sect:5}

Even though the differential conditions appearing in Theorem \ref{thm:LiplscSolv},
Theorem \ref{thm:calmSolv} and Theorem \ref{thm:LipuscSolv} have a transparent
meaning in metric spaces, they need to be further worked in view of
effective employments in more structured settings.
The aim of the present section is therefore to provide
useful (that is, from below) estimates for the three constants
\begin{equation}   \label{eq:dicondconsts}
 \sostslx{\varphi_F}(\bar p,\bar x),\qquad
 \sostsl{\varphi_F(\bar p,\cdot)}(\bar x),\qquad \hbox{ and }\qquad
 \tau_{\bar p},
\end{equation}
which are directly based on the problem data ($F$ and $C$).
A similar question already arose in the study of quantitative stability
properties of the solution set to traditional generalized equations
and has been solved with the aid of (sometimes, ad hoc) involved constructions
of nonsmooth analysis, such as graphical derivatives, prederivatives,
coderivatives, estimators (see, for instance,
\cite{BonSha00,BorZhu05,DonRoc14,Ioff81,Ioff18,KlaKum02,Mord06,Mord18}).
Because of the expression of $\varphi_F$, existing results suitable for
traditional generalized equations seem not be immediately exploitable
within the proposed approach to the problem at the issue.
What follows must be regarded as a first attempt to address the question,
starting with basic tools of convex analysis. It is reasonable to
believe that the employment of more involved constructions of nonsmooth
analysis, already available, might enlarge the class of set-valued
inclusions, for which useful estimates can be established, and
afford deeper insights into this topic.

\begin{definition}[Concave mapping]    \label{def:concavesvmap}
A set-valued mapping $\Phi:\X\rightrightarrows\Y$ between Banach spaces
is said to be {\it concave} on $\X$ if it holds
\begin{equation}     \label{def:concavpro}
  \Phi(tx_1+(1-t)x_2)\subseteq t\Phi(x_1)+(1-t)\Phi(x_2),\quad
  \forall x_1,\, x_2\in\X,\ \forall t\in [0,1].
\end{equation}
\end{definition}

A generalization of the notion of concavity for set-valued mappings
introduced in Definition \ref{def:concavesvmap} has been already
considered, in connection with set-valued inclusions, in \cite{Cast99}.

\begin{example}[Fan]     \label{ex:fan}
After \cite{Ioff81}, a set-valued mapping $\Phi:\X\rightrightarrows\Y$
between Banach spaces is said to be a fan if all the following
conditions are fulfilled:

(i) $\nullv\in\ \Phi(\nullv)$;

(ii) $\Phi(t x)=t \Phi(x)$, $\forall x\in\X$ and $\forall t>0$;

(iii) $\Phi$ takes convex values;

(iv) $\Phi(x_1+x_2)\subseteq \Phi(x_1)+\Phi(x_2)$, $\forall x_1,\, x_2\in\X$.

\noindent By virtue of conditions (ii) and (iv), it is clear that any fan is
a positively homogeneous concave set-valued mapping. As a particular example of fan,
one can consider set-valued mappings which are generated by families
of linear bounded operators. More precisely, let $\mathcal{G}\subseteq\Lin
(\X,\Y)$ be a convex set weakly closed with respect to the weak topology
on $\Lin(\X,\Y)$. Define
$$
  \Phi_\mathcal{G}(x)=\{y\in\Y:\ y=\Lambda x,\, \Lambda\in\mathcal{G}\}.
$$
The set-valued mapping $\Phi_\mathcal{G}:\X\rightrightarrows\Y$ is known to
be a particular example of fan. Note, however, that there are fans, which can not
be generated by any family of linear bounded operators. The set-valued mapping
$\Phi$ considered in Example \ref{ex:PHvsLipusc}(ii) provides an instance of
such a circumstance.
\end{example}

For other examples of concave set-valued mappings see \cite{Uder20}.

The next lemma shows that the assumption of concavity on a set-valued
mapping $\Phi$ entails convenient properties of the related excess function
$\varphi_\Phi$, which allow one to carry out the approach here proposed
by tools of convex analysis.

\begin{lemma}    \label{lem:concaveconvexexc}
Let $\Phi:\X\rightrightarrows\Y$ be a set-valued mapping between
Banach spaces and let $C\subseteq\Y$ a closed, convex cone.
Then,

(i) if $\Phi$ is positively homogeneous on $\X$, so is $\varphi_\Phi$;

(ii) if $\Phi$ is concave on $\X$, $\varphi_\Phi$ is convex;

(iii) if $\Phi$ is superlinear (positively homogeneous and concave)
on $\X$, $\varphi_\Phi$ is sublinear.
\end{lemma}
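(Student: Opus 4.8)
The plan is to reduce all three assertions to two elementary properties of the excess functional $A\mapsto\exc{A}{C}$ and then to read the claims off the structural hypotheses on $\Phi$. The two facts I would isolate first are: (a) the monotonicity $A\subseteq B\ \Rightarrow\ \exc{A}{C}\le\exc{B}{C}$, which is immediate since the defining supremum runs over a larger index set; and (b) the pointwise homogeneity of the distance to $C$, namely $\dist{ta}{C}=t\,\dist{a}{C}$ for every $t>0$, which holds precisely because $C$ is a cone: writing an arbitrary $c\in C$ as $c=t(c/t)$ with $c/t\in C$ gives $\inf_{c\in C}\|ta-c\|=t\inf_{c'\in C}\|a-c'\|$. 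I would also record, for later use, the ``Jensen-type'' estimate for a convex $C$,
\[
  \exc{tA+(1-t)B}{C}\le t\,\exc{A}{C}+(1-t)\,\exc{B}{C},\qquad t\in[0,1],
\]
proved as follows: for $a\in A$, $b\in B$ and $c_1,c_2\in C$, convexity of $C$ gives $tc_1+(1-t)c_2\in C$, whence $\dist{ta+(1-t)b}{C}\le t\|a-c_1\|+(1-t)\|b-c_2\|$; taking infima over $c_1,c_2\in C$ and then suprema over $a\in A$, $b\in B$ yields the claim.

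For part (i), positive homogeneity of $\Phi$ gives $\Phi(tx)=t\Phi(x)$ for $t>0$, so by fact (b), $\varphi_\Phi(tx)=\exc{t\Phi(x)}{C}=\sup_{y\in\Phi(x)}\dist{ty}{C}=t\sup_{y\in\Phi(x)}\dist{y}{C}=t\,\varphi_\Phi(x)$, which is the positive homogeneity of $\varphi_\Phi$. For part (ii), I would combine the concavity inclusion $\Phi(tx_1+(1-t)x_2)\subseteq t\Phi(x_1)+(1-t)\Phi(x_2)$ with fact (a) and the Jensen estimate above to obtain
\[
  \varphi_\Phi(tx_1+(1-t)x_2)\le\exc{t\Phi(x_1)+(1-t)\Phi(x_2)}{C}\le t\,\varphi_\Phi(x_1)+(1-t)\,\varphi_\Phi(x_2),
\]
i.e. the convexity of $\varphi_\Phi$. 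Part (iii) is then purely formal: if $\Phi$ is superlinear, then $\varphi_\Phi$ is positively homogeneous by (i) and convex by (ii), and any positively homogeneous convex function is sublinear, since $\varphi_\Phi(x_1+x_2)=2\varphi_\Phi\big(\tfrac12(x_1+x_2)\big)\le 2\cdot\tfrac12\big(\varphi_\Phi(x_1)+\varphi_\Phi(x_2)\big)=\varphi_\Phi(x_1)+\varphi_\Phi(x_2)$.

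I do not expect a genuine obstacle here; the care that is needed is all bookkeeping. The first point is to keep track of exactly which property of $C$ is invoked where: the cone structure is what produces the positive homogeneity of the excess (hence it is needed in (i) and in the homogeneous half of (iii)), while convexity of $C$ alone suffices for the Jensen estimate that drives (ii). The second point is that $\varphi_\Phi$ is extended-real-valued, so all the inequalities above must be read in $[0,+\infty]$ (or in $\{-\infty\}\cup[0,+\infty]$, depending on the convention chosen for $\exc{\varnothing}{C}$), which is harmless here because $\varphi_\Phi\ge 0$ wherever $\Phi$ has nonempty values and no indeterminate form $(+\infty)-(+\infty)$ can arise; one should also note in passing that concavity of $\Phi$ forces $\dom\Phi$ to be convex, so the endpoints of the segment are in $\dom\Phi$ whenever the midpoint is.
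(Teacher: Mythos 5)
Your proposal is correct and follows essentially the same route as the paper: both arguments reduce everything to the positive homogeneity and convexity of $y\mapsto\dist{y}{C}$ (which the paper cites as the sublinearity of the distance to a convex cone and you reprove from scratch), combined with the monotonicity of the excess under the concavity inclusion and the same supremum manipulation. Your extra bookkeeping in part (iii) and the remarks on extended-real values are harmless elaborations of steps the paper treats as immediate.
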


\begin{proof}
First of all, recall that the function $y\mapsto\dist{y}{C}$, as a distance
function from a convex cone, is sublinear on $\Y$.

(i) One has
\begin{eqnarray*}
  \varphi_\Phi(tx) &=& \sup_{y\in\Phi(tx)}\dist{y}{C}=\sup_{y\in\Phi(x)}
    \dist{ty}{C}=\sup_{y\in\Phi(x)}t\dist{y}{C}  \\
    &=&t\varphi_\Phi(x), \quad\forall t>0,\ \forall x\in\dom\Phi.
\end{eqnarray*}

(ii) Fix $x_1,\, x_2\in\dom\Phi$ and $t\in [0,1]$. By using the concavity
of $\Phi$ and the sublinearity of function $y\mapsto\dist{y}{C}$, one
obtains
\begin{eqnarray*}
   \varphi_\Phi(tx_1+(1-t)x_2) &=& \sup_{y\in\Phi(tx_1+(1-t)x_2)}\dist{y}{C}\le
    \sup_{y\in t\Phi(x_1)+(1-t)\Phi(x_2)}\dist{y}{C} \\
    &=& \sup_{y_1\in\Phi(x_1),\, y_2\in\Phi(x_2)}\dist{ty_1+(1-t)y_2}{C}  \\
    &\le& \sup_{y_1\in\Phi(x_1),\, y_2\in\Phi(x_2)} [t\dist{y_1}{C}+(1-t)\dist{y_2}{C}] \\
    &=& t\sup_{y_1\in\Phi(x_1)}\dist{y_1}{C}+(1-t)\sup_{y_2\in\Phi(x_2)}\dist{y_2}{C} \\
    &=& t\varphi_\Phi(x_1)+(1-t)\varphi_\Phi(x_2).
 \end{eqnarray*}

(iii) This assertion is a straightforward consequence of the above assertions
(i) and (ii).
\end{proof}


A quantitative behaviour, which can be regarded as a counterpart
of the metric decrease property for set-valued mappings taking values
in a partially ordered Banach space, is captured by the next definition.

\begin{definition}[Metric $C$-increase]     \label{def:MCincmap}
Given a closed, convex cone $C\subseteq\Y$, a set-valued mapping
$\Phi:\X\rightrightarrows\Y$ between Banach spaces is said to be

\begin{itemize}

\item[(i)] {\it metrically $C$-increasing}
on $\X$ if there exists a constant $\alpha>1$ such that
\begin{equation}   \label{in:metincdefglo}
   \forall x\in\X,\ \forall r>0,\
   \exists u\in\ball{x}{r}:\ \ball{\Phi(u)}{\alpha r}
     \subseteq\ball{\Phi(x)+C}{r};
\end{equation}
the quantity
$$
   \incr{\Phi}=\sup\{\alpha>1:\ \hbox{ inclusion  $(\ref{in:metincdefglo})$
   holds}\}
$$
is called {\it exact bound of metric $C$-increase} of $\Phi$ on $\X$.

\item[(ii)] {\it metrically $C$-increasing}
around $\bar x\in\dom\Phi$ if there exist $\delta>0$ and $\alpha>1$ such that
\begin{equation}   \label{in:metincdefloc}
   \forall x\in\ball{\bar x}{\delta},\ \forall r\in (0,\delta),\
   \exists u\in\ball{x}{r}:\ \ball{\Phi(u)}{\alpha r}
     \subseteq\ball{\Phi(x)+C}{r};
\end{equation}
the quantity
$$
    \incr{\Phi}(\bar x)=\sup\{\alpha>1:\ \exists\delta>0\ \hbox{such that
    inclusion $(\ref{in:metincdefloc})$ holds}\}
$$
is called {\it exact bound of metric $C$-increase} of $\Phi$ around $\bar x$.
\end{itemize}
Let $\Phi:P\times\X\rightrightarrows\Y$ be a set-valued mapping, defined
on the product of a metric space $P$ with a Banach space $\X$, and
taking values in a Banach space $\Y$, and let $(\bar p,\bar x)\in P\times\X$.
$\Phi$ is said to be
\begin{itemize}

\item[(iii)] {\it metrically $C$-increasing with respect to $x$} around
$(\bar p,\bar x)$, {\it uniformly in $p$}, if there exist $\delta>0$ and
$\alpha>1$ such that
\begin{equation}   \label{in:unifmetincdefglo}
   \forall (p,x)\in\ball{\bar p}{\delta}\times\ball{\bar x}{\delta},\
   \forall r\in (0,\delta),\ \exists u\in\ball{x}{r}:\ \ball{\Phi(p,u)}{\alpha r}
     \subseteq\ball{\Phi(p,x)+C}{r};
\end{equation}
the quantity
$$
   \incr{\Phi}_x(\bar p,\bar x)=\sup\{\alpha>1:\ \hbox{ inclusion
   $(\ref{in:unifmetincdefglo})$  holds}\}
$$
is called {\it exact uniform bound of metric $C$-increase} of $\Phi$
near $(\bar p,\bar x)$.
\end{itemize}
\end{definition}

The above properties have been already used in connection with the study
of error bounds for set-valued inclusions in \cite{Uder19}, where
several examples of the occurrence of the metric $C$-increase property
in global and local form can be found.

\begin{remark}    \label{rem:excsupfprop}
In the proof of the next proposition, the following facts concerning properties
of the excess and the support function will be employed.
Let $S\subseteq\Y$ be a nonempty subset, let $C\subseteq\Y$ be a closed, convex
cone, and let $r>0$. Then, the following equalities hold:

(i) $\exc{S+C}{C}=\exc{S}{C}$ (see \cite[Remark 2.1]{Uder19});

(ii) if $\exc{S}{C}>0$, then $\exc{\ball{S}{r}}{C}=\exc{S}{C}+r$
(see \cite[Lemma 2.2]{Uder19}).

\noindent Let $S\subseteq\X^*$ be a closed, convex set, and let $\supf{\cdot}{S}
:\X\longrightarrow\R\cup\{\pm\infty\}$  denote its support function,
i.e. $\supf{x}{S}=\sup_{x^*\in S}\langle x^*,x\rangle$, where
$\langle\cdot,\cdot\rangle$ denotes the duality pairing the space
$\X$ and its dual $\X^*$.
Then, $\nullv^*\in S$ iff $[\supf{\cdot}{S}\ge 0]=\X$ and, more
precisely, the following distance estimate holds
\begin{equation}     \label{in:distestsupf}
  \dist{\nullv^*}{S}\ge-\inf_{u\in\B}\supf{u}{S}
\end{equation}
(see, for instance, \cite[Remark 2.1]{Uder20}).
\end{remark}

The next proposition explains the role of the property introduced in
Definition \ref{def:MCincmap} within the present approach.
Roughly speaking, it provides a method for measuring the violation of
the set-valued inclusion $\Phi(x)\subseteq C$ near a solution
$\bar x\in\Phi^{+1}(C)$. This is done by tools of convex analysis both
in the primal space (via the directional derivative of $\varphi_\Phi$)
and in the dual space (via the subdifferential of $\varphi_\Phi$).
Recall that, given a convex function $\psi:\X\longrightarrow\R\cup\{\pm\infty\}$
and a point $x\in\psi^{-1}(\R)$ the value
$$
  \psi'(x;v)=\lim_{t\to 0^+}{\psi(x+tv)-\psi(x)\over t},
$$
is called directional derivative of $\psi$ at $x$, in the direction
$v\in\X$.

\begin{proposition}   \label{pro:subdifslinc}
Let $\Phi:\X\rightrightarrows\Y$ be a set-valued mapping between Banach spaces,
let $C\subseteq\Y$ a closed, convex cone and let $\bar x\in\X$ such that
$\Phi(\bar x)\subseteq C$. Suppose that:

(i) $\Phi$ is l.s.c., concave and bounded-valued away from $C$ on $\X$;

(ii) $\Phi$ is metrically $C$-increasing around $\bar x$, with exact bound
$\incr{\Phi}(\bar x)$.

\noindent Then, there exists $\eta>0$ such that
\begin{equation}   \label{in:indirderloc}
    \inf_{u\in\B}\varphi_\Phi'(x;u)\le 1-\incr{\Phi}(\bar x),\quad
    \forall x\in\ball{\bar x}{\eta}\cap[\varphi_\Phi>0],
\end{equation}
and hence
\begin{equation} \label{in:subdifslincloc}
 \dist{\nullv^*}{\partial\varphi_\Phi(x)}\ge\incr{\Phi}(\bar x)-1>0,\quad
 \forall x\in\ball{\bar x}{\eta}\cap[\varphi_\Phi>0].
\end{equation}

\noindent If hypothesis (ii) is replaced with

(ii)' $\Phi$ is metrically $C$-increasing on $\X$, with exact bound
$\incr{\Phi}$,

\noindent then, one has
\begin{equation}
    \inf_{u\in\B}\varphi_\Phi'(x;u)\le 1-\incr{\Phi},\quad
    \forall x\in[\varphi_\Phi>0],
\end{equation}
and hence
\begin{equation}
 \dist{\nullv^*}{\partial\varphi_\Phi(x)}\ge\incr{\Phi}-1>0,\quad
 \forall x\in[\varphi_\Phi>0].
\end{equation}
\end{proposition}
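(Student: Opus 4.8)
The plan is to convert the metric $C$-increase inclusion into a quantitative descent inequality for the excess function $\varphi_\Phi$, and then to read the directional-derivative estimate (\ref{in:indirderloc}) and the subdifferential estimate (\ref{in:subdifslincloc}) off from convexity. Preliminarily, note that by hypothesis (i) and Lemma~\ref{lem:concaveconvexexc}(ii) the function $\varphi_\Phi$ is convex; by Lemma~\ref{lem:Phivarphi}(i) it is l.s.c.; and since $\dom\Phi=\X$ while $\Phi$ is bounded-valued away from $C$, $\varphi_\Phi$ is finite on $\X$ (Remark~\ref{rem:varphirealval}). Thus $\varphi_\Phi$ is a finite l.s.c. convex function on a Banach space, hence continuous and locally Lipschitz, so that $\partial\varphi_\Phi(x)$ is nonempty, bounded, closed and convex and $\varphi_\Phi'(x;u)=\supf{u}{\partial\varphi_\Phi(x)}$ for every $x$ and $u$.

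Next, fix $\alpha\in(1,\incr{\Phi}(\bar x))$ and let $\delta_\alpha>0$ be the radius furnished by (\ref{in:metincdefloc}). Take any $x\in\ball{\bar x}{\delta_\alpha}$ with $\varphi_\Phi(x)>0$ and any $r\in(0,\min\{\delta_\alpha,\,\varphi_\Phi(x)/(\alpha-1)\})$, and let $u\in\ball{x}{r}$ be the point supplied by (\ref{in:metincdefloc}). Applying $\exc{\cdot}{C}$ to $\ball{\Phi(u)}{\alpha r}\subseteq\ball{\Phi(x)+C}{r}$ and combining monotonicity of the excess with Remark~\ref{rem:excsupfprop}(i) (which gives $\exc{\Phi(x)+C}{C}=\varphi_\Phi(x)$) and Remark~\ref{rem:excsupfprop}(ii) (which, since $\varphi_\Phi(x)>0$, gives $\exc{\ball{\Phi(x)+C}{r}}{C}=\varphi_\Phi(x)+r$, and, whenever $\varphi_\Phi(u)>0$, gives $\exc{\ball{\Phi(u)}{\alpha r}}{C}=\varphi_\Phi(u)+\alpha r$), one obtains $\varphi_\Phi(u)+\alpha r\le\varphi_\Phi(x)+r$; in the remaining case $\varphi_\Phi(u)=0$, the restriction $r\le\varphi_\Phi(x)/(\alpha-1)$ makes the inequality $\varphi_\Phi(u)\le\varphi_\Phi(x)-(\alpha-1)r$ hold trivially. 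In either case $u\ne x$ (else $r\le0$); writing $t:=\|u-x\|\in(0,r]$ and $v:=(u-x)/t\in\B$, the fact that the slopes $s\mapsto\bigl(\varphi_\Phi(x+sv)-\varphi_\Phi(x)\bigr)/s$ are nondecreasing in $s>0$ yields
\[
  \varphi_\Phi'(x;v)\le\frac{\varphi_\Phi(x+tv)-\varphi_\Phi(x)}{t}
  =\frac{\varphi_\Phi(u)-\varphi_\Phi(x)}{t}\le-(\alpha-1)\frac{r}{t}\le-(\alpha-1),
\]
hence $\inf_{u\in\B}\varphi_\Phi'(x;u)\le1-\alpha$ for all $x\in\ball{\bar x}{\delta_\alpha}\cap[\varphi_\Phi>0]$.

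Dualizing, for each such $x$ inequality (\ref{in:distestsupf}) of Remark~\ref{rem:excsupfprop}, applied to the closed convex set $S=\partial\varphi_\Phi(x)$ (whose support function is $u\mapsto\varphi_\Phi'(x;u)$), gives $\dist{\nullv^*}{\partial\varphi_\Phi(x)}\ge-\inf_{u\in\B}\varphi_\Phi'(x;u)\ge\alpha-1>0$. Letting $\alpha\uparrow\incr{\Phi}(\bar x)$ then delivers (\ref{in:indirderloc}) and (\ref{in:subdifslincloc}), the radius $\eta$ being chosen as $\delta_\alpha$ with $\alpha$ taken as close to $\incr{\Phi}(\bar x)$ as one pleases. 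Under hypothesis (ii)$'$ the very same computation goes through with no neighborhood restriction whatsoever: (\ref{in:metincdefglo}) is available at every $x\in\X$ for every $r>0$, so $\inf_{u\in\B}\varphi_\Phi'(x;u)\le1-\alpha$ and $\dist{\nullv^*}{\partial\varphi_\Phi(x)}\ge\alpha-1$ hold at every $x\in[\varphi_\Phi>0]$ for every $\alpha<\incr{\Phi}$, and one lets $\alpha\uparrow\incr{\Phi}$.

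The step requiring the most care is the excess computation: one must invoke Remark~\ref{rem:excsupfprop}(i)--(ii) correctly --- in particular the identity in (ii) is valid only when the relevant excess is strictly positive --- and must dispose of the borderline case $\varphi_\Phi(u)=0$ through the choice of $r$; the rest is routine convex analysis. A minor point worth recording is that in the local statement the radius $\eta$ genuinely depends on the gap $\incr{\Phi}(\bar x)-\alpha$, so the exact constant $\incr{\Phi}(\bar x)$ in (\ref{in:indirderloc})--(\ref{in:subdifslincloc}) is reached only through the supremum over the admissible values of $\alpha$.
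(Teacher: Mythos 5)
Your proof is correct and follows essentially the same route as the paper: it converts the metric $C$-increase inclusion into a descent inequality for $\varphi_\Phi$ via the excess identities of Remark \ref{rem:excsupfprop}, passes to the directional derivative by monotonicity of convex difference quotients, and dualizes through the Moreau--Rockafellar formula together with $(\ref{in:distestsupf})$. The only (harmless) difference is the treatment of the borderline case: the paper uses lower semicontinuity of $\varphi_\Phi$ to shrink to a ball contained in $[\varphi_\Phi>0]$, so that Remark \ref{rem:excsupfprop}(ii) applies at the displaced point, whereas you cap $r$ by $\varphi_\Phi(x)/(\alpha-1)$ and dispose of the case $\varphi_\Phi(u)=0$ directly.
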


\begin{proof}
First of all observe that, on account of Lemma \ref{lem:Phivarphi}(i), Lemma
\ref{lem:concaveconvexexc}(ii), and Remark \ref{rem:varphirealval}, by hypothesis (i)
the function $\varphi_\Phi:\X\longrightarrow\R$ is l.s.c., convex and
$\varphi_\Phi^{-1}(\R)=\X$.

Fix an arbitrary $\alpha\in(1,\incr{\Phi}(\bar x))$ and let $\delta>0$ be
as in Definition \ref{def:MCincmap}(ii). Take an arbitrary $x_0\in\ball{\bar x}{\delta}
\cap[\varphi_\Phi>0]$. Since it is $\varphi_\Phi(x_0)>\varphi_\Phi(\bar x)=0$, the point
 $x_0$ can not be a (global) minimizer of $\varphi_\Phi$.
Consequently, as the inclusion $\nullv^*\in\partial\varphi_\Phi(x_0)$
characterizes the minimality for a convex function, it must be
$\nullv^*\not\in\partial\varphi_\Phi(x_0)$.
Since function $\varphi_\Phi$ is l.s.c. on $\X$, the superlevel set $[\varphi_\Phi>0]$
turns out to be open, so there exists $\delta_0>0$ such that $\ball{x_0}{\delta_0}
\subseteq [\varphi_\Phi>0]$, what means that
$$
  \Phi(x)\not\subseteq C,\quad\forall x\in\ball{x_0}{\delta_0}.
$$
Now, according to hypothesis (ii), for every $t\in (0,\delta_0)$ there exists
$u_t\in\B$ such that
$$
  \ball{\Phi(x_0+tu_t)}{\alpha t}\subseteq\ball{\Phi(x_0)+C}{t},
$$
while $\Phi(x_0+tu_t)\not\subseteq C$. By taking into account the
equalities in Remark \ref{rem:excsupfprop}(i) and (ii), one obtains
\begin{eqnarray}     \label{in:phiatx0}
   \exc{\ball{\Phi(x_0+tu_t)}{\alpha t}}{C}  &\le&
   \exc{\ball{\Phi(x_0)+C}{t}}{C} =
   \exc{\Phi(x_0)+C}{C}+t   \\  \nonumber
   &=&\exc{\Phi(x_0)}{C}+t = \varphi_\Phi(x_0)+t.
\end{eqnarray}
On the other hand, one has
\begin{equation}    \label{eq:phiaround}
   \exc{\ball{\Phi(x_0+tu_t)}{\alpha t}}{C} =\exc{\Phi(x_0+tu_t)}{C}
   +\alpha t = \varphi_\Phi(x_0+tu_t)+\alpha t.
\end{equation}
From equality $(\ref{eq:phiaround})$ and the relations in $(\ref{in:phiatx0})$,
one deduces
$$
  \inf_{u\in\B}{\varphi_\Phi(x_0+tu)-\varphi_\Phi(x_0)\over t}\le
  {\varphi_\Phi(x_0+tu_t)-\varphi(x_0)\over t}\le 1-\alpha,
  \quad\forall t\in (0,\delta_0).
$$
Therefore, it results in
$$
   \inf_{u\in\B}\varphi_\Phi'(x_0;u)=\inf_{u\in\B}\inf_{t\in (0,\delta_0)}
   {\varphi_\Phi(x_0+tu)-\varphi_\Phi(x_0)\over t}=
   \inf_{t\in (0,\delta_0)}\inf_{u\in\B}{\varphi_\Phi(x_0+tu)-\varphi_\Phi(x_0)\over t}
   \le 1-\alpha.
$$
Thus, by arbitrariness of $\alpha\in(1,\incr{\Phi})$ and $x_0\in\ball{\bar x}{\delta}
\cap[\varphi_\Phi>0]$, it suffices to set $\eta=\delta$ to obtain inequality
$(\ref{in:indirderloc})$. The estimate in $(\ref{in:subdifslincloc})$ can be
established by exploiting the inequality $(\ref{in:distestsupf})$ in
Remark \ref{rem:excsupfprop} and by recalling the Moreau-Rockafellar
representation formula for the directional derivative of a l.s.c. convex
function
$$
  \varphi_\Phi'(x;u)=\supf{u}{\partial\varphi_\Phi(x)},\quad\forall u\in\X,
$$
which is valid for every $x\in\inte(\dom\varphi_\Phi)=\X$ (see, for instance,
\cite[Theorem 4.2.7]{BorZhu05}). Here $\inte S$ denotes the (topological)
interior of a set $S$. Take into account that for a l.s.c. convex function
the interior of its domain coincides with the core of the domain
(see, for instance, \cite[Theorem 4.1.8]{BorZhu05}).

The second part of the thesis, upon hypothesis (ii)', can be proved
in a similar manner, with plane adaptations.
\end{proof}

Let us come back now to the context of parameterized set-valued inclusions
$(\mathcal{SVI}_p)$. It will be assumed henceforth that for any $p\in P$
near $\bar p$, the set-valued mapping $x\leadsto F(p,x)$ is concave.
Notice that, upon such an assumption,
an appreciable consequence of equality $(\ref{eq:levsetupinvchar})$
is that $\Solv:P\rightrightarrows\X$ is convex-valued, in the light
of Lemma \ref{lem:concaveconvexexc}(ii). Moreover, some estimates
of the constants in $(\ref{eq:dicondconsts})$ can be obtained
via exact bounds of metric $C$-increase, as stated below.

\begin{theorem}    \label{thm:constestinc}
Let $F:P\times\X\rightrightarrows\Y$ be a set-valued mapping, defined
on the product of a metric space $P$ with a Banach space $\X$, and
taking values in a Banach space $\Y$. Let $C\subseteq\Y$ be a closed,
convex cone, let $\bar p\in P$ and let $\bar x\in\Solv(\bar p)$.
Suppose that each set-valued mapping $F(p,\cdot):\X\rightrightarrows
\Y$ is l.s.c., concave and bounded-valued away from $C$ on $\X$,
for every $p\in\ball{\bar p}{\delta}$, for some $\delta>0$.

(i) If $F$ is metrically $C$-increasing with respect to $x$ around
$(\bar p,\bar x)$, uniformly in $p$ with exact uniform bound $\incr{F}_x
(\bar p,\bar x)$, then it holds
$$
 \sostslx{\varphi_F}(\bar p,\bar x)\ge\incr{F}_x(\bar p,\bar x)-1>0.
$$

(ii) If $F(\bar p,\cdot)$ is metrically $C$-increasing around $\bar x$,
with exact bound $\incr{F(\bar p,\cdot)}(\bar x)$, then
$$
 \sostsl{\varphi_F(\bar p,\cdot)}(\bar x)\ge
 \incr{F(\bar p,\cdot)}(\bar x)-1>0.
$$

(iii) If $F(\bar p,\cdot)$ is metrically $C$-increasing on $\X$,
with exact bound $\incr{F(\bar p,\cdot)}$, then
$$
  \tau_{\bar p}\ge \incr{F(\bar p,\cdot)}-1>0.
$$
\end{theorem}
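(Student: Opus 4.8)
The plan is to read off all three inequalities from Proposition \ref{pro:subdifslinc}, combined with the fact recorded in Example \ref{ex:stsl}(ii) that for an l.s.c.\ convex function the strong slope coincides with the subdifferential slope. First I would note that, by hypothesis, for every $p\in\ball{\bar p}{\delta}$ the mapping $F(p,\cdot)$ is l.s.c., concave and bounded-valued away from $C$, so that by Lemma \ref{lem:Phivarphi}(i), Lemma \ref{lem:concaveconvexexc}(ii) and Remark \ref{rem:varphirealval} the function $\varphi_F(p,\cdot):\X\longrightarrow\R$ is l.s.c., convex and real-valued on $\X$; hence by Example \ref{ex:stsl}(ii) the partial strong slope $\stsl{\varphi_F(p,\cdot)}(x)$ equals the subdifferential slope of $\varphi_F(p,\cdot)$ at $x$, for every $x\in\X$. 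In this way the subdifferential estimates furnished by Proposition \ref{pro:subdifslinc} become lower estimates on the partial strong slopes entering the constants $(\ref{eq:dicondconsts})$.

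For (ii) and (iii) the argument is then immediate. Applying Proposition \ref{pro:subdifslinc} to $\Phi=F(\bar p,\cdot)$ (its hypotheses hold since $\bar x\in\Solv(\bar p)$ gives $F(\bar p,\bar x)\subseteq C$), in the local form I obtain $\eta>0$ with $\stsl{\varphi_F(\bar p,\cdot)}(x)\ge\incr{F(\bar p,\cdot)}(\bar x)-1>0$ for every $x\in\ball{\bar x}{\eta}\cap[\varphi_F(\bar p,\cdot)>0]$; since $\bar x\in\Solv(\bar p)$ forces $\varphi_F(\bar p,\bar x)=0$, for every $\epsilon\in(0,\eta)$ the set $\{x\in\ball{\bar x}{\epsilon}:\ 0<\varphi_F(\bar p,x)<\epsilon\}$ is contained in $\ball{\bar x}{\eta}\cap[\varphi_F(\bar p,\cdot)>0]$, so the infimum defining $\sostsl{\varphi_F(\bar p,\cdot)}(\bar x)$ is $\ge\incr{F(\bar p,\cdot)}(\bar x)-1$; letting $\epsilon\to 0^+$ proves (ii). For (iii) I would use instead the variant of Proposition \ref{pro:subdifslinc} under hypothesis (ii)$'$, which gives $\stsl{\varphi_F(\bar p,\cdot)}(x)\ge\incr{F(\bar p,\cdot)}-1>0$ for every $x\in[\varphi_F(\bar p,\cdot)>0]$; by $(\ref{eq:levsetupinvchar})$ this superlevel set equals $X\backslash\Solv(\bar p)$, so taking the infimum over it yields $\tau_{\bar p}\ge\incr{F(\bar p,\cdot)}-1>0$.

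The point I expect to cost the most work is (i), because $\sostslx{\varphi_F}(\bar p,\bar x)$ samples the strong slopes of all the functions $\varphi_F(p,\cdot)$ with $p$ near $\bar p$, whereas Proposition \ref{pro:subdifslinc} concerns a single mapping $\Phi$ subject to $\Phi(\bar x)\subseteq C$, and for $p\ne\bar p$ one need not have $F(p,\bar x)\subseteq C$. I would resolve this by extracting a \emph{uniform} slope bound: fix $\alpha\in(1,\incr{F}_x(\bar p,\bar x))$ and choose $\delta>0$ small enough that the uniform $C$-increase inclusion $(\ref{in:unifmetincdefglo})$ holds with this $\alpha$ and, simultaneously, each $F(p,\cdot)$ with $p\in\ball{\bar p}{\delta}$ is l.s.c., concave and bounded-valued away from $C$. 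For each such $p$, the mapping $F(p,\cdot)$ is then metrically $C$-increasing around $\bar x$ in the sense of Definition \ref{def:MCincmap}(ii) with the constants $\delta$ and $\alpha$; inspecting the proof of Proposition \ref{pro:subdifslinc}, the slope bound $\stsl{\varphi_F(p,\cdot)}(x)\ge\alpha-1$ at points $x\in\ball{\bar x}{\delta}\cap[\varphi_F(p,\cdot)>0]$ is derived there solely from lower semicontinuity, concavity, boundedness away from $C$ and the local $C$-increase around $\bar x$, and does not use the requirement $\Phi(\bar x)\subseteq C$; hence it applies verbatim with $\Phi=F(p,\cdot)$. This gives $\stsl{\varphi_F(p,\cdot)}(x)\ge\alpha-1$ for every $p\in\ball{\bar p}{\delta}$ and every $x\in\ball{\bar x}{\delta}$ with $\varphi_F(p,x)>0$. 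Finally, since $\varphi_F(\bar p,\bar x)=0$, for $\epsilon\in(0,\delta)$ every pair $(p,x)$ admitted in the infimum defining $\sostslx{\varphi_F}(\bar p,\bar x)$ lies in $\ball{\bar p}{\delta}\times\ball{\bar x}{\delta}$ and satisfies $\varphi_F(p,x)>0$, so that infimum is $\ge\alpha-1$; passing to the limit $\epsilon\to 0^+$ gives $\sostslx{\varphi_F}(\bar p,\bar x)\ge\alpha-1$, and the arbitrariness of $\alpha<\incr{F}_x(\bar p,\bar x)$ completes the proof.
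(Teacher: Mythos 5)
Your proof is correct and follows essentially the same route as the paper: both reduce all three estimates to Proposition \ref{pro:subdifslinc} combined with the identification of the strong slope with the subdifferential slope for l.s.c.\ convex functions (Example \ref{ex:stsl}(ii)), and then unwind the definitions of $\sostslx{\varphi_F}(\bar p,\bar x)$, $\sostsl{\varphi_F(\bar p,\cdot)}(\bar x)$ and $\tau_{\bar p}$. Your extra care in part (i) --- noting that for $p\ne\bar p$ one need not have $F(p,\bar x)\subseteq C$, and checking that the slope bound in the proof of Proposition \ref{pro:subdifslinc} is derived without actually using the hypothesis $\Phi(\bar x)\subseteq C$ --- is a legitimate refinement of a step the paper carries out without comment.
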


\begin{proof}
(i) Under the current assumptions, it is possible to apply Proposition
\ref{pro:subdifslinc} to each function $F(p,\cdot):\X\rightrightarrows\Y$,
with $p\in\ball{\bar p}{\delta}$. As a consequence, there exists $\eta>0$
such that
$$
  \dist{\nullv^*}{\partial\varphi_F(p,\cdot)(x)}\ge\incr{F}_x
  (\bar p,\bar x)-1,\quad\forall x\in\ball{\bar x}{\eta}\cap
  [\varphi_F(p,\cdot)>0].
$$
Without loss of generality, one can assume $\eta<\delta$. It is to be
noticed that the value of $\eta$ is the same for each $p\in\ball{\bar p}{\delta}$,
by virtue of the unifom version of the metric $C$-increase property
postulated in hypothesis (i). Thus, in the light of Example \ref{ex:stsl}(ii),
it results in
$$
  \stsl{\varphi_F(p,\cdot)}(\bar x)=\dist{\nullv^*}{\partial
  \varphi_F(p,\cdot)(x)}\ge\incr{F}_x(\bar p,\bar x)-1,
  \quad\forall (p,x)\in\ball{\bar p}{\eta}\times\ball{\bar x}{\eta}:
  \ \varphi_F(p,x)>0.
$$
By recalling the definition in $(\ref{def:sostslx})$, one immediately obtains
\begin{eqnarray*}
  \sostslx{\varphi_F}(\bar p,\bar x) &=& \lim_{\epsilon\to 0^+}\inf
     \{\stsl{\varphi_F(p,\cdot)}(x): (p,x)\in \ball{\bar p}{\epsilon}\times
    \ball{\bar x}{\epsilon},\\
    & & \hskip4cm \varphi_F(\bar p,\bar x)<
     \varphi_F(p,x)<\varphi_F(\bar p,\bar x)+\epsilon\} \\
    &\ge & \inf\{\dist{\nullv^*}{\partial\varphi_F(p,\cdot)(x)}:\
    (p,x)\in\ball{\bar p}{\eta}\times\ball{\bar x}{\eta},\
  \ \varphi_F(p,x)>0\}  \\
  &\ge& \incr{F}_x(\bar p,\bar x)-1>0.
\end{eqnarray*}
 In the case of assertions (ii) and (iii), it suffices to apply Proposition
\ref{pro:subdifslinc} with $\Phi=F(\bar p,\cdot)$  and to recall that
$$
  \stsl{\varphi_F(\bar p,\cdot)}(x)=\dist{\nullv^*}{\partial\varphi_F
  (\bar p,\cdot)(x)}.
$$
This completes the proof.
\end{proof}

As an application of Theorem \ref{thm:constestinc}, concretely computable
estimates for some of the constants in $(\ref{eq:dicondconsts})$ are
provided in the next example, in the special case of fans generated by
linear operators between finite-dimensional Euclidean spaces.

\begin{example}
Let $\X=\R^n$ and $\Y=\R^m$ be equipped with their usual Euclidean space
structure and let $(P,d)$ be a metric space. Let $\mathcal{G}:P\rightrightarrows
\Lin(\R^n,\R^m)$ be a set-valued mapping with convex and compact values, and
with $\dom\mathcal{G}=P$. According to Example \ref{ex:fan}, $\mathcal{G}$
defines a mapping $H_\mathcal{G}:P\times\R^n\rightrightarrows\R^m$ as follows
$$
  H_\mathcal{G}(p,x)=\{\Lambda x:\ \Lambda\in\mathcal{G}(p)\}.
$$
Observe that for any $p\in P$ the set-valued mapping
$x\leadsto H_\mathcal{G}(p,x)$ is a fan, so it is concave on $\R^n$, and it is also
bounded-valued on $\R^n$, as $\mathcal{G}(p)$ is a compact set.
Moreover, it is l.s.c. at every point $\bar x\in\R^n$. Indeed, for every
open set $O\subseteq\R^m$ such that $H_\mathcal{G}(p,\bar x)\cap O\ne\varnothing$,
there must exist $\Lambda_O\in\mathcal{G}(p)$ such that $\Lambda_O\bar x\in O$.
Therefore, it suffices to consider $\Lambda_O^{-1}(O)$ to get a neighbourhood
$U$ of $\bar x$ such that $H_\mathcal{G}(p,x)\cap O\ne\varnothing$, for every
$x\in U$.
Fix $\bar p\in P$ and, taken any $\Lambda\in\mathcal{G}(\bar p)$, define
$$
   \cov\Lambda=\inf_{\|y\|=1}\|\Lambda^\top y\|,
$$
where $\Lambda^\top$ stands for the adjoint operator to $\Lambda$ (thereby
represented by the transpose to the matrix representing $\Lambda$).
It is well known in variational analysis that
$$
  \cov\Lambda=\sup\{\eta>0:\ \Lambda\B\supseteq\eta\B\},
$$
where $\B$ denotes the closed unit ball centered at $\nullv$
(see, for instance, \cite[Corollary 1.58]{Mord06}).
Let $C\subseteq\R^m$ be a closed, convex and pointed cone, such that
$\{\nullv\}\ne C\ne\R^m$ and $\inte C\ne\varnothing$.
Let us show that, if
$$
  \bar\eta=\inf_{\Lambda\in\mathcal{G}(\bar p)}\cov\Lambda>0
$$
and
\begin{equation}    \label{ne:intecond}
    \inte\left(\bigcap_{\Lambda\in\mathcal{G}(\bar p)}
    \Lambda^{-1}(C)\right)\ne\varnothing,
\end{equation}
then the set-valued mapping $H_\mathcal{G}(\bar p,\cdot)$
is metrically $C$-increasing on $\X$, with exact bound
$\incr{H_\mathcal{G}(\bar p,\cdot)}\ge\bar\eta+1$.
According to condition $(\ref{ne:intecond})$, there exist $u\in\R^n$ and
$\epsilon\in (0,1)$ such that
$$
  u+\epsilon\B\subseteq\bigcap_{\Lambda\in\mathcal{G}(\bar p)}
    \Lambda^{-1}(C).
$$
Actually, it is possible to assume $u\ne\nullv$. Indeed, if it is
$u=\nullv$ so that
$$
    \epsilon\B\subseteq\bigcap_{\Lambda\in\mathcal{G}(\bar p)}
    \Lambda^{-1}(C),
$$
for some $v\in\epsilon\B\backslash\{\nullv\}$, it is true that
$$
  \Lambda v\in C,\qquad \Lambda(-v)=-\Lambda v\in C,\qquad
  \forall \Lambda \in\mathcal{G}(\bar p).
$$
Since $C$ is a pointed cone, the above two inclusions imply that
$\Lambda v=\nullv$, for every $\Lambda \in\mathcal{G}(\bar p)$,
whence
$$
  \Lambda(v+\epsilon\B)=\Lambda(\epsilon\B)\subseteq C,
  \quad\forall \Lambda \in\mathcal{G}(\bar p).
$$
This inclusion  means that
$$
   v+\epsilon\B\subseteq\bigcap_{\Lambda\in\mathcal{G}(\bar p)}
    \Lambda^{-1}(C).
$$
So one can take $u=v\ne\nullv$. Furthermore, since the set
$\cap_{\Lambda\in\mathcal{G}(\bar p)}\Lambda^{-1}(C)$ is a cone
as an intersection of cones, it is also possible to assume that
$u\in\B\backslash\{\nullv\}$.

Take an arbitrary $\eta\in (0,\bar\eta)$. Since $\eta<
\inf_{\Lambda\in\mathcal{G}(\bar p)}\cov\Lambda$, so $\eta<
\cov\Lambda$ for every $\Lambda \in\mathcal{G}(\bar p)$, it
holds
$$
  \Lambda(\epsilon\B)\supseteq\epsilon\eta\B,\quad\forall
  \Lambda \in\mathcal{G}(\bar p).
$$
Consequently, it results in
$$
  \Lambda u+\epsilon\eta\B\subseteq\Lambda(u+\epsilon\B)
  \subseteq C,\quad\forall\Lambda \in\mathcal{G}(\bar p).
$$
By definition of $H_\mathcal{G}$, one has
$$
  H_\mathcal{G}(\bar p,u)+\epsilon\eta\B\subseteq C,
$$
wherefrom, as the set-valued mapping $H_\mathcal{G}(\bar p,\cdot)$
is positively homogeneous, it follows
$$
   H_\mathcal{G}(\bar p,\epsilon^{-1}u)+\eta\B\subseteq
   \epsilon^{-1}C=C.
$$
Now, take arbitrary $x\in\R^n$ and $r>0$. By the last inclusion,
setting $z=x+r\epsilon^{-1}u$, one has that $z\in\ball{x}{r}$ and
\begin{eqnarray*}
  \ball{H_\mathcal{G}(\bar p,z)}{(\eta+1)r} &=&
   H_\mathcal{G}(\bar p,z)+(\eta+1)r\B\subseteq H_\mathcal{G}(\bar p,x)
   +rH_\mathcal{G}(\bar p,\epsilon^{-1}u)+\eta r\B+r\B \\
   &=& H_\mathcal{G}(\bar p,x)+r(H_\mathcal{G}(\bar p,\epsilon^{-1}u)+\eta\B)+r\B \\
   &\subseteq & H_\mathcal{G}(\bar p,x)+rC+r\B=H_\mathcal{G}(\bar p,x)+C+r\B \\
   &=& \ball{H_\mathcal{G}(\bar p,x)+C}{r}.
\end{eqnarray*}
The above inclusion chain shows that $H_\mathcal{G}(\bar p,\cdot)$
is metrically $C$-increasing on $\X$, with exact bound
$\incr{H_\mathcal{G}(\bar p,\cdot)}\ge\eta+1$. By arbitrariness of
$\eta\in (0,\bar\eta)$, it results in
$$
  \incr{H_\mathcal{G}(\bar p,\cdot)}\ge\left(\inf_{\Lambda\in\mathcal{G}(\bar p)}
  \cov\Lambda\right)+1.
$$
Thus, by applying Theorem \ref{thm:constestinc}(iii), one can achieve the
estimates
$$
  \sostsl{\varphi_{H_\mathcal{G}}(\bar p,\cdot)}(\bar x)\ge\tau_{\bar p}\ge
  \inf_{\Lambda\in\mathcal{G}(\bar p)} \cov\Lambda=\inf_{\Lambda\in\mathcal{G}(\bar p)}
  \inf_{\|y\|=1}\|\Lambda^\top y\|.
$$
\end{example}


\section{Conclusions}  \label{Sect:6}

This paper contains a study on several quantitative semicontinuity properties
of the solution mapping to parameterized set-valued inclusions, which are
problems of interest in robust optimization and related fields, where set-valuedness
enters as a key feature. The findings of the exposed investigations show that,
by following a variational approach already adopted for similar properties,
even thought in the context of different problems (traditional generalized equations), it is
possible to perform an effective analysis in metric spaces. As a result of
this analysis, mainly focused on sufficient conditions for the occurrence of
the aforementioned properties, the problem of estimating certain slope constants emerges,
in the perspective of deriving verifiable conditions in more structured
settings. A first solution to this question is obtained by tools of convex
analysis. Moreover, the achievements reported seem to afford suggestions
for a more general solution, able to embed broader class of set-valued inclusions,
which consist in adapting already existent tools of nonsmooth analysis
(specific nonconvex subdifferential and related coderivatives, with a
suitable calculus).
Further development directions of the present study relate to the analysis
of other Lipschitz-type properties for the solution mapping to parameterized set-valued
inclusions, such as Aubin property and isolated calmness, following the approach
proposed in Section \ref{Sect:3}.

\vskip.5cm

\noindent{\bf Acknowledgements} The author would like to express his gratitude
to two anonymous referees for relevant remarks and useful suggestions, which
helped him to improve the quality of the paper.


\vskip1cm

\bibliographystyle{amsplain}

\vskip1cm

\end{document}